\title{Inverse Random Source Scattering for Elastic Waves}
\author{Gang Bao \thanks{School of Mathematical Sciences, Zhejiang University,
Hangzhou 310027, China; The research was supported in
part by a Key Project of the Major Research Plan of NSFC (No. 91130004),
an NSFC A3 Project (No.11421110002), NSFC Tianyuan Projects (No. 11426235; No.
11526211), and a special research grant from Zhejiang University. ({\tt
baog@zju.edu.cn})} \and Chuchu Chen \thanks{Department of Mathematics, Michigan
State University, East Lansing, MI 48824, USA. ({\tt
chen2095@math.purdue.edu})} \and Peijun Li \thanks{Department of Mathematics,
Purdue University, West Lafayette, Indiana 47907, USA. This author's research
was supported in part by the NSF grant DMS-1151308. ({\tt
lipeijun@math.purdue.edu})}}
\begin{document}

\maketitle

\begin{abstract}
This paper is concerned with the direct and inverse random source scattering
problems for elastic waves where the source is assumed to be driven by an
additive white noise. Given the source, the direct problem is to determine the
displacement of the random wave field. The inverse problem is to reconstruct the
mean and variance of the random source from the boundary measurement of the wave
field at multiple frequencies. The direct problem is shown to have a unique mild
solution by using a constructive proof. Based on the explicit mild solution,
Fredholm integral equations of the first kind are deduced for the inverse
problem. The regularized Kaczmarz method is presented to solve the ill-posed
integral equations. Numerical experiments are included to demonstrate the
effectiveness of the proposed method.
\end{abstract}

\begin{keywords}
Inverse source scattering problem, elastic wave equation, stochastic
partial differential equation, Fredholm integral equation
\end{keywords}

\begin{AMS}
78A46, 65C30
\end{AMS}

\pagestyle{myheadings}
\thispagestyle{plain}
\markboth{G. Bao, C. Chen, and P. Li}{Inverse Random Source Scattering for
Elastic Waves}

\section{Introduction}

The inverse source scattering problems, an important research subject in inverse
scattering theory, are to determine the unknown sources that generate prescribed
radiated wave pattens \cite{CK-98, I-89}. These problems are largely motivated
by applications in medical imaging \cite{FKM-IP04}. A typical example is to use
electric or magnetic measurements on the surface of the human body, such as
head, to infer the source currents inside the body, such as the brain, that
produce the measured data. Mathematically, the inverse source scattering
problems have been widely examined for acoustic and electromagnetic waves by
many researchers \cite{AM-IP06, ABF-SJAM02, BN-IP11, BLT-JDE10, BLRX-SJNA15,
ZG-IP15, DML-SJAM07, EV-IP09, MD-IEEE99}. For instance, it is known that the
inverse source problem does not have a unique solution at a fixed frequency due 
to the existence of non-radiating sources \cite{DS-IEEE82, HKP-IP05}; it is
ill-posed as small variations in the measured data can lead to huge errors in
the reconstructions \cite{BLLT-IP15}. 

Although the deterministic counterparts have been well studied, little is known
for the stochastic inverse problems due to uncertainties, which are widely
introduced to the models for two common reasons: randomness may directly appear
in the studied systems \cite{E-13, F-06} and incomplete knowledge of the systems
must be modeled by uncertainties \cite{KE-05}. A uniqueness result may be found
in \cite{D-JMP79}, where it showed that the auto-correlation function of the
random source was uniquely determined everywhere outside the source region by
the auto-correlation function of the radiated field. Recently, one-dimensional
stochastic inverse source problems have been considered in \cite{BCLZ-MC14,
BX-IP13, L-IP11}, where the governing equations are stochastic ordinary
differential equations. Utilizing the Green functions, the authors have
presented the first approach in \cite{BCL} for solving the inverse random
source scattering problem in higher dimensions, where the stochastic
partial differential equations are considered. 

In this paper, we study both the direct and inverse random source scattering
problems for elastic waves. Given the source, the direct problem is to determine
the displacement of the random wave field. The inverse problem is to reconstruct
the mean and variance of the random source from the boundary measurement of the
wave field. Recently, the elastic wave scattering problems have received ever
increasing attention for their significant applications in many scientific areas
such as geophysics and seismology \cite{BC-IP05, LL-86, LWZ-IP15}. For example,
they have played an important role in the problem for elastic pulse transmission
and reflection through the Earth when investigating earthquakes and determining
their focus, which is exactly the motivation of this work. 

The random source is assumed to be driven by a white noise, which can be thought
as the derivative of a Brownian sheet or a multi-parameter Brownian motion. The
goal is to determine the mean and variance of the random source function by
using the same statistics of the displacement of the wave field, which are
measured on a boundary enclosing the compactly supported source at multiple
frequencies. By constructing a sequence of regular processes approximating the
rough white noise, we show that there exists a unique mild solution to the
stochastic direct scattering problem. By studying the expectation and variance
of the solution, we deduce Fredholm integral equations of the first kind for the
inverse problem. It is known that Fredholm integral equations of the first kind
are severely ill-posed, which can be clearly seen from the distribution of
singular values for our integral equations. It is particularly true for the
integral equations of reconstructing the variance. We present well conditioned
integral equations via linear combination of the original equations. We
propose a regularized Kaczmarz method to solve the resulting linear system of
algebraic equations. This method is consistent with our multiple frequency data
and requires solving a relatively small scale system at each iteration.
Numerical experiments show that the proposed approach is effective to solve the
problem.

This work is a nontrivial extension of the method proposed in \cite{BCL} for the
inverse random source scattering problem of the stochastic Helmholtz equation,
to solve the inverse random source scattering problem of the stochastic Navier
equation. Clearly, the elastic wave equation is more challenging due to the
coexistence of compressional waves and shear waves that propagate at different
speeds. The Green function is more complicated and has higher singularity for
the Navier equation than that of the Helmholtz equation does. Hence more
sophisticated analysis is required. 

The paper is organized as follows. In section 2, we introduce the stochastic
Navier equation for elastic waves and discuss the solutions of the
deterministic and stochastic direct problems. Section 3 is devoted to the
inverse problem, where Fredholm integral equations are deduced and the
regularized Kaczmarz method is proposed to reconstruct the mean and the
variance. Numerical experiments are presented in section 4 to illustrate the
performance of the proposed method. The paper is concluded with general
remarks in section 5.

\section{Direct problem}

In this section, we introduce the Navier equation for elastic waves and discuss
the solutions of the deterministic and stochastic direct source scattering
problems.

\subsection{Problem formulation}

Consider the scattering problem of the two-dimensional stochastic Navier
equation in a homogeneous and isotropic medium
\begin{equation}\label{ne}
 \mu\Delta \boldsymbol{u} + (\lambda+\mu) \nabla\nabla\cdot\boldsymbol{u}
+\omega^2 \boldsymbol{u} = \boldsymbol{f}\quad\text{in} ~ \mathbb{R}^2,
\end{equation}
where $\omega>0$ is the angular frequency, $\lambda$ and $\mu$ are the Lam\'{e}
constants
satisfying $\mu>0$ and $\lambda+\mu>0$, and $\boldsymbol{u}=(u_1, u_2)^\top$ is
the displacement of the random wave field. As a source, the electric
current density $\boldsymbol{f}=(f_1, f_2)^\top$ is assumed to be a random
function driven by an additive white noise and takes the
form
\begin{equation}\label{rsf}
 \boldsymbol{f}(x)=\boldsymbol{g}(x)+\boldsymbol{h}(x)\dot{W}_x.
\end{equation}
Here $\boldsymbol{g}=(g_1, g_2)^\top$ is a deterministic real vector function
and $\boldsymbol{h}={\rm diag}(h_1, h_2)$ is a deterministic diagonal
matrix function with $h_j\geq 0$. We assume that $g_j, h_j, j=1, 2$ have
compact supports contained in the rectangular domain $D\subset\mathbb{R}^2$. 
$W(x)=(W_1(x), W_2(x))^\top$ is a two-dimensional two-parameter Brownian sheet
where $W_1(x)$ and $W_2(x)$ are two independent one-dimensional
two-parameter Brownian sheets . $\dot{W}_x$ is a white noise which can be
thought as the derivative of the Brownian sheet $W_x$. To make the paper
self-contained, some preliminaries are presented in the appendix for the
Brownian sheet, white noise, and corresponding stochastic integrals. 

In this random source model, $\boldsymbol{g}$ and $\boldsymbol{h}$ can be viewed
as the mean and standard deviation of $\boldsymbol{f}$, respectively. Hence $
\boldsymbol{h}^2={\rm diag}(h_1^2, h_2^2)$ is the variance of
$\boldsymbol{f}$. To ensure the uniqueness of the solution, the following
Kupradze-Sommerfeld radiation condition is required for the radiated wave
field: 
\begin{equation}\label{rc}
\lim_{r\to\infty}r^{1/2}(\partial_r \boldsymbol{u}_{\rm p}-{\rm i}\kappa_{\rm
p}\boldsymbol{u}_{\rm p})=0,\quad\lim_{r\to\infty}r^{1/2}(\partial_r
\boldsymbol{u}_{\rm s}-{\rm i}\kappa_{\rm s}\boldsymbol{u}_{\rm s})=0,\quad
r=|x|,
\end{equation}
uniformly in all directions $\hat{x}=x/|x|$, where
\[
 \boldsymbol{u}_{\rm p}= -\frac{1}{\kappa^2_{\rm
p}}\nabla\nabla\cdot\boldsymbol{u},\quad \boldsymbol{u}_{\rm
s}=\frac{1}{\kappa^2_{\rm s}}\nabla\times(\nabla\times\boldsymbol{u})
\]
are the compressional component and the shear component of $\boldsymbol{u}$,
respectively, and 
\[
\kappa_{\rm p}=\frac{\omega}{\sqrt{\lambda+2\mu}},  \quad \kappa_{\rm
s}=\frac{\omega}{\sqrt{\mu}}
\]
are known as the compressional wavenumber and the shear wavenumber,
respectively. 

Let $B_R=\{x\in\mathbb{R}^d: |x|<R\}$ be the ball with radius $R$. Denote by
$\partial B_R$ the boundary of $B_R$. Let $R$ be large enough such that
$\bar{D}\subset B_R$. Given the random electric current density function
$\boldsymbol{f}$, i.e., given $\boldsymbol{g}$ and $\boldsymbol{h}$, the direct
problem is to determine the random wave field $\boldsymbol{u}$ of the stochastic
scattering problem \eqref{ne} and \eqref{rc}. The inverse problem is to
determine the mean $\boldsymbol{g}$ and the standard deviation $\boldsymbol{h}$
or the variance $\boldsymbol{h}^2$ of the random source function from the
measured random wave field on $\partial B_R$ at a finite number of 
frequencies $\omega_k, k=1, \dots, K$.

\subsection{Deterministic direct problem}

We begin with the solution for the deterministic direct source problem. Let
$\boldsymbol{h} =0$ in \eqref{rsf}, i.e., no randomness is present in the
source. The stochastic scattering problem reduces to the deterministic
scattering problem:
\begin{equation}\label{dp}
 \begin{cases}
   \mu\Delta \boldsymbol{u} + (\lambda+\mu) \nabla\nabla\cdot\boldsymbol{u}
+\omega^2 \boldsymbol{u}=\boldsymbol{g}\quad& \text{in}~\mathbb{R}^2,\\[2pt]
\partial_r \boldsymbol{u}_{\rm p}-{\rm i}\kappa_{\rm
p}\boldsymbol{u}_{\rm p}=o(r^{-1/2})\quad&\text{as} ~ r\to\infty,\\[2pt]
\partial_r \boldsymbol{u}_{\rm s}-{\rm
i}\kappa_{\rm s}\boldsymbol{u}_{\rm s}=o(r^{-1/2})\quad&\text{as} ~ r\to\infty.
 \end{cases}
\end{equation}
Given $\boldsymbol{g}\in L^2(D)^2$, it is known that the scattering problem
\eqref{dp} has a unique solution
\begin{equation}\label{sdp}
 \boldsymbol{u}(x, \omega)=\int_D \mathbb{G}(x, y,\omega) \boldsymbol{g}(y) {\rm
d}y,
\end{equation}
where $\mathbb{G}$ is the Green tensor function of the Navier equation:
\begin{align*}
\mathbb{G}(x,y,\omega)=\frac{\rm i}{4\mu}H_{0}^{(1)}(\kappa_{\rm
s}|x-y|)\mathbb{I}+\frac{\rm i}{4\omega^2}\nabla_{x}\nabla_{x}^\top
\Bigl[H_{0}^{(1)}(\kappa_{\rm s}|x-y|)-H_{0}^{(1)}(\kappa_{\rm p}|x-y|)\Bigr].
\end{align*}
Here $\mathbb{I}$ is the $2\times 2$ identity matrix, $H_{0}^{(1)}$ is the
Hankel function of the first kind with order zero, and
\[
\nabla_{x}\nabla_{x}^\top=\begin{pmatrix}
  \partial_{x_1x_1} & \partial_{x_1x_2}\\
  \partial_{x_1x_2} & \partial_{x_2x_2}
\end{pmatrix}.
\]

It is known that this Green tensor function has an equivalent form
\begin{equation}\label{gf}
  \mathbb{G}(x,y,\omega)=G_{1}(|x-y|)\mathbb{I}+G_{2}(|x-y|)\mathbb{J}(x-y), 
\end{equation}
which plays an vital role in derivation of the subsequent regularity analysis.
Here for $x\in {\mathbb R}^2\backslash\{0\}$, the matrix $\mathbb{J}$ is given
by
\[
\mathbb{J}(x)=\frac{xx^\top}{|x|^2}.
\]
It is shown in \cite{K-96} that the functions $G_{1}$ and $G_{2}$ can be
decomposed into 
\begin{equation}\label{G_j}
  G_{j}(v)=\frac{1}{\pi}\log (v) \Phi_{j}(v)+\eta_{j}(v),\quad j=1,2,
\end{equation}
where $\Phi_{j}$ and $\eta_{j}$ are analytic functions. Explicitly, we have 
\begin{equation}\label{Phi_j}
  \Phi_{1}(v)=\alpha+\beta_{1}v^2+O(v^4),\quad \Phi_{2}=\beta_{2}v^2+O(v^4)
\end{equation}
and
\begin{equation}\label{eta_j}
  \eta_{1}(v)=\gamma_{1}+O(v^2),\quad \eta_{2}=\gamma_{2}+O(v^2)
\end{equation}
for $v\rightarrow 0$, where $\alpha$, $\beta_{j}$, and $\gamma_{j}$ are
constants depending on $\omega$, $\mu$, and $\lambda$.

The following regularity results of the Green tensor function play an
important role in the analysis of the stochastic direct scattering problem.

\begin{lemma}\label{gfe}
Let $\Omega\subset\mathbb{R}^2$ be a bounded domain. Then $\|\mathbb{G}(\cdot,
y)\|\in L^2(\Omega),\, \forall\, y\in\Omega$, i.e.
\[
\int_{\Omega}\|\mathbb{G}(x,y,\omega)\|^{2} {\rm d}x<\infty,
\]
where $\|\cdot\|$ is the Frobenius norm. 
\end{lemma}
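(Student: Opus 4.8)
The plan is to exploit the equivalent representation \eqref{gf} of the Green tensor together with the singularity decomposition \eqref{G_j}--\eqref{eta_j}, reducing the claim to the elementary fact that a logarithm is locally square integrable in two dimensions. The point worth emphasizing is that the original formula for $\mathbb{G}$ contains the term $\nabla_x\nabla_x^\top[H_0^{(1)}(\kappa_{\rm s}|x-y|)-H_0^{(1)}(\kappa_{\rm p}|x-y|)]$, which at first sight behaves like $|x-y|^{-2}$ near the diagonal and would \emph{not} be square integrable; the content of \eqref{gf}--\eqref{eta_j}, taken from \cite{K-96}, is precisely that these strongly singular contributions cancel, leaving only a logarithmic singularity. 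So the first step is simply to rewrite $\mathbb{G}$ via \eqref{gf}.

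Next I would bound the Frobenius norm pointwise. Since $\|\mathbb{I}\|=\sqrt2$ and, for $x\neq y$, $\|\mathbb{J}(x-y)\|=1$ (because $\mathbb{J}(z)=zz^\top/|z|^2$ satisfies $\|\mathbb{J}(z)\|^2=|z|^{-4}\sum_{i,j}z_i^2z_j^2=1$), the triangle inequality for $\|\cdot\|$ gives
\[
\|\mathbb{G}(x,y,\omega)\|\le \sqrt2\,|G_1(|x-y|)|+|G_2(|x-y|)|,\qquad x\in\Omega\setminus\{y\}.
\]
I would then split $\Omega$ into a small disk $B_\delta(y)$ and the remainder $\Omega\setminus B_\delta(y)$. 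On the remainder, $H_0^{(1)}(\kappa_{\rm s}|x-y|)$, $H_0^{(1)}(\kappa_{\rm p}|x-y|)$ and the derivatives entering $G_1,G_2$ are continuous in $|x-y|\in[\delta,\operatorname{diam}\Omega]$, so $G_1,G_2$ are bounded there; since $\Omega$ is bounded, this part of the integral is trivially finite.

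It remains to handle $B_\delta(y)$. Using \eqref{G_j}, $G_j(v)=\tfrac1\pi\log(v)\Phi_j(v)+\eta_j(v)$ with $\Phi_j,\eta_j$ analytic, hence bounded on $[0,\delta]$; consequently there is a constant $C=C(\delta,\omega,\mu,\lambda)$ with $|G_j(v)|\le C(1+|\log v|)$ for $0<v\le\delta$, $j=1,2$. Passing to polar coordinates centred at $y$,
\[
\int_{B_\delta(y)}\|\mathbb{G}(x,y,\omega)\|^2\,{\rm d}x\le C'\int_0^\delta (1+|\log r|)^2\,r\,{\rm d}r<\infty,
\]
since $r(\log r)^2\to0$ as $r\to0^+$ and is integrable near $0$ (indeed $r|\log r|^p$ is integrable for every $p$, so the borderline two-dimensional case is comfortably covered). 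Summing the two contributions proves the lemma. I do not expect any genuine obstacle: the only care required is to argue from the simplified tensor \eqref{gf}--\eqref{eta_j}, where only a logarithmic singularity remains, rather than from the unsimplified expression whose apparent $|x-y|^{-2}$ growth would be fatal.
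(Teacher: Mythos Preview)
Your proof is correct and follows essentially the same approach as the paper: both reduce via \eqref{gf}--\eqref{G_j} and $\|\mathbb{J}\|=1$ to the local square integrability of the logarithm in two dimensions, then finish with polar coordinates. The only cosmetic differences are that the paper bounds $\Omega$ by a single large disk $B_a(y)$ rather than splitting into near/far regions, and it explicitly carries the factors $|x-y|^m$ from \eqref{Phi_j}--\eqref{eta_j} instead of absorbing them into a constant on $[0,\delta]$.
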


\begin{proof}
Let $a=\sup_{x, y\in\Omega}|x-y|$. We have $\bar{\Omega}\subset B_a(y)$, where
$B_a(y)$ is the ball with radius $a$ and center at $y$. Since $\|J(w)\|=1$, it
follows from the expression in \eqref{gf} and \eqref{G_j} that we only need to
show that
\[
 \log(|x-y|)|x-y|^{m}\in L^2(\Omega),\quad\forall\, y\in\Omega, ~ m\geq
0.
\]
A simple calculation yields
\begin{align*}
 \int_\Omega \Bigl| \log(|x-y|)|x-y|^{m} \Bigr|^2 {\rm d}x&\leq
\int_{B_a(y)} \Bigl|\log(|x-y|)|x-y|^{m} \Bigr|^2 {\rm d}x\\
&\lesssim \int_0^a r^{1+2m}\Bigl|\log(r)\Bigr|^2{\rm d}r<\infty,
\end{align*}
which completes the proof.
\end{proof}

Throughout the paper, $a\lesssim b$ stands for $a\leq C b$, where $C$ is a
positive constant and its specific value is not required but should be clear
from the context.

\begin{lemma}
Let $\Omega\subset\mathbb{R}^2$ be a bounded domain. We have for
$\alpha\in(\frac{3}{2}, \, \infty)$ that

\begin{equation}\label{G2}
 \int_\Omega \|\mathbb{G}(x, y,\omega)-\mathbb{G}(x, z,\omega)\|^\alpha {\rm
d}x\lesssim |y-z|^{\frac{3}{2}},\quad\forall ~ y, z\in\Omega.
\end{equation}

\end{lemma}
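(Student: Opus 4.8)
The plan is to split the integral according to how close $x$ is to $y$ and $z$, treating the singular part near $\{y,z\}$ by crude $L^\alpha$ bounds and the remaining part by a mean value estimate for $\mathbb{G}$. Write $\delta:=|y-z|$ and $a:=\sup_{x,y\in\Omega}|x-y|$. First I would dispose of the case $\delta\geq\delta_0$ for some fixed small $\delta_0>0$: repeating the computation in the proof of Lemma~\ref{gfe} with $\alpha$ in place of the exponent $2$ (using $\|\mathbb{J}\|=1$, the decomposition \eqref{gf}--\eqref{eta_j}, and $\int_0^a r^{1+2m}|\log r|^\alpha\,{\rm d}r<\infty$) gives $\int_\Omega\|\mathbb{G}(x,y,\omega)\|^\alpha\,{\rm d}x\leq C$ with $C$ independent of $y\in\Omega$, so by the triangle inequality the left-hand side of \eqref{G2} is bounded uniformly in $y,z$, and for $\delta\geq\delta_0$ it is then $\leq C\delta_0^{-3/2}\,\delta^{3/2}$. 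Hence I may assume $\delta\leq\delta_0$ from now on.

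Set $N=\{x\in\Omega:\ |x-y|<2\delta\ \text{or}\ |x-z|<2\delta\}$ and $F=\Omega\setminus N$. On $N$, which lies in the union of the two discs of radius $3\delta$ about $y$ and about $z$, I would use $\|\mathbb{G}(x,y)-\mathbb{G}(x,z)\|^\alpha\lesssim\|\mathbb{G}(x,y)\|^\alpha+\|\mathbb{G}(x,z)\|^\alpha$, and since $\|\mathbb{G}(x,w)\|\lesssim 1+|\log|x-w||$ by \eqref{gf}--\eqref{eta_j}, polar coordinates give $\int_N\lesssim\int_0^{3\delta}r\,(1+|\log r|^\alpha)\,{\rm d}r\lesssim\delta^2|\log\delta|^\alpha$, which is $\lesssim\delta^{3/2}$ for $\delta\leq\delta_0$ because $\delta^{1/2}|\log\delta|^\alpha\to0$.

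The far part is where the hypothesis $\alpha>\tfrac32$ is used, and its key ingredient is the gradient bound $\|\nabla_y\mathbb{G}(x,y,\omega)\|\lesssim|x-y|^{-1}$ for $x,y\in\Omega$; this I would read off from \eqref{G_j}--\eqref{eta_j} via the product rule applied to \eqref{gf}, noting that $G_1'(v)=O(v^{-1})$ and $G_2'(v)=O(v\log v)$ near $v=0$ while $\mathbb{J}(x-y)$ is bounded with derivatives $O(|x-y|^{-1})$ and $G_2$ is bounded, and that away from the diagonal all the functions are analytic on the bounded set and hence their derivatives are bounded. Granting this, for $x\in F$ every point of the segment $z+t(y-z)$, $t\in[0,1]$, lies at distance $\geq|x-z|-\delta\geq\tfrac12|x-z|$ from $x$, so $\mathbb{G}(x,y)-\mathbb{G}(x,z)=\int_0^1\nabla_y\mathbb{G}(x,z+t(y-z))\cdot(y-z)\,{\rm d}t$ yields $\|\mathbb{G}(x,y)-\mathbb{G}(x,z)\|\lesssim\delta\,|x-z|^{-1}$ on $F$. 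Integrating in polar coordinates about $z$ over $F\subset\{2\delta\leq|x-z|\leq a\}$ gives $\int_F\lesssim\delta^\alpha\int_{2\delta}^a r^{1-\alpha}\,{\rm d}r$, which is $\lesssim\delta^\alpha+\delta^2$ for $\alpha\neq2$ and $\lesssim\delta^2(1+|\log\delta|)$ for $\alpha=2$; since $\alpha>\tfrac32$ and $\delta\leq\delta_0$, each of $\delta^\alpha$, $\delta^2$, $\delta^2|\log\delta|$ is $\lesssim\delta^{3/2}$, and adding the $N$ and $F$ estimates proves \eqref{G2}. I expect the gradient estimate to be the main obstacle: one must follow the $\log$-singularity carefully through both the $G_1\mathbb{I}$ and $G_2\mathbb{J}$ pieces of \eqref{gf} to check that only the integrable $|x-y|^{-1}$ rate appears, and the exponent $\tfrac32$ is exactly what is forced by the term $\delta^\alpha$ coming from the range $\tfrac32<\alpha<2$.
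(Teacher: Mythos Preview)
Your argument is correct and cleaner than the paper's. Both proofs split the domain, but the decompositions and the tools differ. The paper never writes down the gradient bound $\|\nabla_y\mathbb{G}(x,y)\|\lesssim|x-y|^{-1}$; instead it expands $\mathbb{G}=G_1\mathbb{I}+G_2\mathbb{J}$ and treats the three resulting terms $T_1,T_2,T_3$ by hand. For the logarithmic piece of $G_1$ it factors
\[
\bigl|\log|x-y|-\log|x-z|\bigr|^{\alpha}
=\bigl||x-y|-|x-z|\bigr|^{3/2}\,
\bigl|\log|x-y|-\log|x-z|\bigr|^{\alpha-3/2}
\Bigl|\int_0^1\bigl(|x-y|t+|x-z|(1-t)\bigr)^{-1}{\rm d}t\Bigr|^{3/2}
\]
and applies H\"older with the somewhat ad~hoc exponents $(\tfrac65,6)$; for $T_3$ it bounds the entries of $\mathbb{J}(x-y)-\mathbb{J}(x-z)$ explicitly and only then performs a four-piece domain decomposition (balls about $y$ and $z$, an intermediate annulus, and the far region) analogous to your near/far split. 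Your route replaces all of that by the single estimate $\|\nabla_y\mathbb{G}\|\lesssim|x-y|^{-1}$, which you verify correctly from \eqref{G_j}--\eqref{eta_j}: the worst contribution comes from $G_1'(v)\sim v^{-1}$, while the $G_2\mathbb{J}$ piece is harmless because $G_2$ stays bounded and $\nabla\mathbb{J}=O(|x-y|^{-1})$. The payoff is a shorter, more conceptual argument that would apply verbatim to any kernel with the same size and gradient control; the paper's approach avoids differentiating $\mathbb{G}$ at the cost of longer explicit computations. Your observation that the exponent $\tfrac32$ is dictated by the far integral $\delta^\alpha\int_{2\delta}^{a}r^{1-\alpha}\,{\rm d}r\sim\delta^\alpha$ in the range $\tfrac32<\alpha<2$ matches exactly the paper's final bound $T_3\lesssim|y-z|^{\min\{\alpha,2\}}$.
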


\begin{proof}
It follows from \eqref{gf} and the triangle inequality that 
\begin{align*}
   &\int_\Omega \|\mathbb{G}(x, y,\omega)-\mathbb{G}(x, z,\omega)\|^\alpha {\rm
d}x\\ 
&\lesssim\int_{\Omega}\Bigl\|G_{1}(|x-y|)\mathbb{I}-G_{1}(|x-z|)\mathbb{I}
\Bigr\|^{\alpha}{\rm
d}x\\
&\hspace{4cm}+\int_{\Omega}\Bigl\|G_{2}(|x-y|)\mathbb{J}(x-y)-G_{2}
(|x-z|)\mathbb{J}(x-z)\Bigr\|^{\alpha}{\rm d}x\\
   &\lesssim \int_{\Omega}\Bigl|G_{1}(|x-y|)-G_{1}(|x-z|)\Bigr|^{\alpha}{\rm d}x
 +\int_{\Omega}\Bigl|G_{2}(|x-y|)-G_{2}(|x-z|)\Bigr|^{\alpha}\|\mathbb{J}
(x-y)\|^{\alpha}{\rm d}x\\
 &\hspace{4cm}+\int_{\Omega}|G_{2}(|x-z|)|^{\alpha}\|\mathbb{J}(x-y)-\mathbb{J}
(x-z)\|^{\alpha}{\rm d}x\\
   &=:T_1+T_2+T_3.
\end{align*}
We shall only present the estimates of $T_1$ and $T_3$ since the estimates of
$T_1$ and $T_2$ are similar due to  $\|\mathbb{J}(x)\|=1$ and \eqref{G_j}.

Step 1. The estimate of $T_1$. It suffices to estimate the singular part $G_{1}$
of $T_1$. It follows from \eqref{G_j}, \eqref{Phi_j}, and \eqref{eta_j} that we
have
\begin{align*}
  T_1\lesssim & \int_{\Omega}\Bigl|\log(|x-y|)-\log(|x-z|)\Bigr|^{\alpha}{\rm
d}x+\int_{\Omega}\Bigl||x-y|^{p}-|x-z|^{p}\Bigr|^{\alpha}{\rm d}x\\ 
&+\int_{\Omega}\Bigl|\log(|x-y|)|x-y|^{p}-\log(|x-z|)|x-z|^{p}\Bigr|^{\alpha}{
\rm d}x\\
  =:& T_1^{1}+T_1^{2}+T_1^{3},
\end{align*}
where $p=2$ or $4$. 

For the term $T_1^{1}$, we have
\begin{align*}
T_1^{1}=&\int_\Omega \Bigl||x-y|-|x-z|\Bigr|^{\frac{3}{2}}
\Bigl|\log(|x-y|)-\log(|x-z|)\Bigr|^{\alpha-\frac{3}{2}}\\
&\hspace{4cm}\times\left|\int_0^1 \Bigl(|x-y|t+|x-z|(1-t)\Bigr)^{-1}{\rm d}t
\right|^{\frac{3}{2}}{\rm d}x\\
\leq&|y-z|^{\frac{3}{2}}\int_\Omega
 \Bigl|\log(|x-y|)-\log(|x-z|)\Bigr|^{\alpha-\frac{3}{2}}
 \left|\frac{1}{|x-y|}+\frac{1}{
|x-z|} \right|^{\frac{3}{2}}{\rm d}x\\
\leq& |y-z|^{\frac{3}{2}}\left(\int_\Omega
\Bigl(\frac{1}{|x-y|}+\frac{1}{|x-z|}\Bigr)^{\frac{9
}{5}}{\rm d}x \right)^{\frac{5}{6}}\\
&\hspace{4cm}\times\left(\int_\Omega
\Bigl|\log(|x-y|)-\log(|x-z|)\Bigr|^{6\alpha-9}{\rm
d}x \right)^{\frac{1}{6}},
\end{align*}
where we have used the H\"older inequality in the last step.

Let $a=\sup_{x, y\in\Omega}|x-y|$ and $b=\sup_{x, z\in\Omega}|x-z|$. We have
$\bar{\Omega}\subset B_a(y)$ and $\bar{\Omega}\subset B_b(z)$, where $B_a(y)$
and $B_b(z)$ are the discs with radii $a$ and $b$ and centers at $y$ and $z$,
respectively. It is easy to verify that
\begin{align*}
 \int_\Omega \Bigl(\frac{1}{|x-y|}+\frac{1}{|x-z|}
\Bigr)^{\frac{9}{5}}{\rm d}x &\lesssim \int_{B_a(y)}
\frac{1}{|x-y|^{\frac{9}{5}}}{\rm d}x +\int_{B_b(z)}
\frac{1}{|x-z|^{\frac{9}{5}}}{\rm d}x \\
&\lesssim \int_0^a r^{-\frac{4}{5}}{\rm d}r +\int_0^b r^{-\frac{4}{5}}{\rm d}r
<\infty
\end{align*}
and
\begin{align*}
&\int_\Omega\Bigl|\log(|x-y|)-\log(|x-z|)\Bigr|^{6\alpha-9}{\rm
d}x \\
&\lesssim \int_{B_a(y)}\Bigl|\log(|x-y|)\Bigr|^{6\alpha-9}{\rm
d}x+\int_{B_b(z)}\Bigl|\log(|x-z|)\Bigr|^{6\alpha-9}{\rm
d}x\\
&\lesssim \int_0^a r\Bigl|\log(r)\Bigr|^{6\alpha-9}{\rm d}r+
\int_0^b r\Bigl|\log(r)\Bigr|^{6\alpha-9}{\rm d}r<\infty.
\end{align*}
Combining the above estimates gives $T_1^{1}\lesssim |y-z|^{\frac32}$.

For the term $T_1^{2}$, using the identity
\[
a^{p}-b^{p}=(a-b)\bigl(a^{p-1}+a^{p-2}b+\cdots+ab^{p-2}+b^{p-1}\bigr),
\]
we obtain 
\begin{align*}
T_1^2&\lesssim |y-z|^{\alpha}\int_{\Omega}\Bigl[|x-y|^{(p-1)\alpha}+|x-y|^{
(p-2)\alpha}|x-z|^{ \alpha}
  +\cdots+|x-z|^{(p-1)\alpha}\Bigr]{\rm d}x\\
  &\lesssim |y-z|^{\alpha}.
\end{align*}
Applying the technique of estimating the terms $T_1^1$ and $T_1^2$, we
get the estimate for the term $T_1^{3}$:
\begin{align*}
T_1^{3}&\lesssim
\int_{\Omega}\Bigl|\log(|x-y|)\Bigr|^{\alpha}\Bigl||x-y|^{p}-|x-z|^{p}\Bigr|^{
\alpha}{\rm d}x\\
&\hspace{4cm}+\int_{\Omega}\Bigl|\log(|x-y|)-\log(|x-z|)\Bigr|^{\alpha}|x-z|^{
p\alpha}{\rm d}x\\
&\lesssim|y-z|^{\alpha}\int_{\Omega}\Bigl|\log(|x-y|)\Bigr|^{\alpha}\Bigl[|x-y|^
{(p-1)\alpha}+|x-y|^{(p-2)\alpha}|x-z|^{\alpha}
  +\cdots+|x-z|^{(p-1)\alpha}\Bigr]{\rm d}x\\
  &\quad+
|y-z|^{\frac32}\int_{\Omega}\Bigl|\log(|x-y|)-\log(|x-z|)\Bigr|^{\alpha-\frac{3}
{2}}
 \left|\frac{1}{|x-y|}+\frac{1}{
|x-z|} \right|^{\frac{3}{2}}|x-z|^{p\alpha}{\rm d}x\\
&\lesssim |y-z|^{\alpha}+|y-z|^{\frac32}.
\end{align*}
Therefore, we obtain $T_1\leq |y-z|^{\frac32}$ for $\alpha>\frac32$. 
Similarly, we may have the estimate $T_2\leq |y-z|^{\frac32}$.

Step 2. The estimate of term $T_3$. Recall 
\[
\mathbb{J}(x)=\begin{bmatrix}J_{11}(x) & J_{12}(x)\\ J_{12}(x) &
J_{22}(x)\end{bmatrix}
=\begin{bmatrix}\frac{x_{1}^{2}}{|x|^2} & \frac{x_1x_2}{|x|^2}\\
\frac{x_1x_2}{|x|^2} & \frac{x_2^2}{|x|^2}\end{bmatrix}.
\]
A simple calculation yields
\begin{align*}
\|\mathbb{J}(x-y)-\mathbb{J}(x-z)\|^2&=|J_{11}(x-y)-J_{11}(x-z)|^2\\
&+2|J_{12}(x-y)-J_{12}(x-z)|^2+|J_{22}(x-y)-J_{22}(x-z)|^2. 
\end{align*}
It is easy to verify that 
\begin{align*} 
&|J_{11}(x-y)-J_{11}(x-z)|\\
=&\left|\frac{(x_1-y_1)^2}{|x-y|^2}-\frac{(x_1-z_1)^2}
{ |x-z|^2}\right|\\
=&\left|\Big(\frac{x_1-y_1}{|x-y|}+\frac{x_1-z_1}{|x-z|}\Big)\Big(\frac{x_1-y_1}
{|x-y|}-\frac{x_1-z_1}{|x-z|}\Big)\right|\\
  \lesssim& \left|\frac{x_1-y_1}{|x-y|}-\frac{x_1-z_1}{|x-z|}\right|\\
=&\left|\frac{\big(|x-z|-|x-y|\big)(x_1-y_1)+|x-y|\big((x_1-y_1)-(x_1-z_1)\big)}
{|x-y||x-z|}\right|\\
  \lesssim& \frac{|y-z|}{|x-z|}.
\end{align*}
Similarly, we may show that 
\begin{align*}
|J_{22}(x-y)-J_{22}(x-z)|=\left|\frac{(x_2-y_2)^2}{|x-y|^2}-\frac{(x_2-z_2)^2}{
|x-z|^2}\right|
  \lesssim \frac{|y-z|}{|x-z|}
\end{align*}
and
\begin{align*} 
|J_{12}(x-y)-J_{12}(x-z)|=&\left|\frac{(x_1-y_1)(x_2-y_2)}{|x-y|^2}-\frac{
(x_1-z_1)(x_2-z_2)}{|x-z|^2}\right|\\
  \leq &
\left|\frac{(x_1-y_1)(x_2-y_2)}{|x-y|^2}-\frac{(x_1-y_1)(x_2-y_2)}{|x-z|^2}
\right|\\
& 
+\left|\frac{(x_1-y_1)(x_2-y_2)}{|x-z|^2}-\frac{(x_1-z_1)(x_2-z_2)}{|x-z|^2}
\right|\\
\leq & \frac{|y-z|}{|x-z|}+\frac{|x-y|}{|x-z|^2}|y-z|.
\end{align*}
Hence 
\[
\|\mathbb{J}(x-y)-\mathbb{J}(x-z)\|\lesssim
\frac{|y-z|}{|x-z|}+\frac{|x-y|}{|x-z|^2}|y-z|
\]
and 
\[
\|\mathbb{J}(x-y)-\mathbb{J}(x-z)\|\lesssim
\frac{|y-z|}{|x-y|}+\frac{|x-z|}{|x-y|^2}|y-z|.
\]

Again, it suffices to consider the singular part $G_2$ of $T_3$.
Using \eqref{G_j}, \eqref{Phi_j}, and \eqref{eta_j}, we split the term
$T_3$ into two parts:
\begin{align*}
 T_3\lesssim &
\int_{\Omega}\left|\log(|x-z|)\Big[|x-z|^2+|x-z|^4\Big]+|x-z|^2\right|^{\alpha}
\|\mathbb{J}(x-y)-\mathbb{J}(x-z)\|^{\alpha}{\rm d}x\\
  &+\int_{\Omega}\|\mathbb{J}(x-y)-\mathbb{J}(x-z)\|^{\alpha}{\rm d}x\\
  =:&T_3^1+T_3^2.
\end{align*}
For the term $T_3^1$, we have
\begin{align*}
  T_3^1\lesssim &
|y-z|^{\alpha}\int_{\Omega}\left|\log(|x-z|)\Big[|x-z|^2+|x-z|^4\Big]
+|x-z|^2\right|^{\alpha}
  \Big[\frac{1}{|x-z|^{\alpha}}+\frac{|x-y|^{\alpha}}{|x-z|^{2\alpha}}\Big]{\rm
d}x\\
  \lesssim & |y-z|^{\alpha}.
\end{align*}
The estimate of $T_3^2$ is more technical. We let $\xi=\frac{y+z}{2}$ and 
$r=|y-z|$. It is clear to note that 
\begin{align*}
T_3^2&=\left(\int_{B_{\frac{r}{4}}(y)}+\int_{B_{\frac{r}{4}}(z)}+\int_{B_{2r}
(\xi)\backslash B_{\frac{r}{4}}(y) \cup B_{\frac{r}{4}}(z) }
  +\int_{\Omega\backslash B_{2r}(\xi)}\right)
\|\mathbb{J}(x-y)-\mathbb{J}(x-z)\|^{\alpha}{\rm d}x\\
 & =:I_1+I_2+I_3+I_4.
\end{align*}
Next we estimate the above four parts. First we have 
\[
  I_1 \lesssim \int_{B_{\frac{r}{4}}(y)}
\frac{r^{\alpha}}{|x-z|^{\alpha}}+\frac{|x-y|^{\alpha}}{|x-z|^{2\alpha}}r^{
\alpha} {\rm d}x\lesssim \int_{B_{\frac{r}{4}}(y)} {\rm d}x\lesssim r^2,
\]
where we have utilized the fact that $|x-y|\leq
\frac{r}{4}$ and $|x-z|>\frac{r}{2}$ for $x\in B_{\frac{r}{4}}(y)$. Similarly,
we have
\begin{align*}
I_2 \lesssim \int_{B_{\frac{r}{4}}(z)}
\frac{r^{\alpha}}{|x-y|^{\alpha}}+\frac{|x-z|^{\alpha}}{|x-y|^{2\alpha}}r^{
\alpha} {\rm d}x
\lesssim r^2.
\end{align*}
For the term $I_3$, we have $|x-z|\geq \frac{r}{4}$ and
$\frac{r}{4}\leq|x-y|<3r$  for any $x\in B_{2r}(\xi)\backslash
B_{\frac{r}{4}}(y) \cup B_{\frac{r}{4}}(z)$. Thus
\begin{align*}
I_3\leq \int_{B_{2r}(\xi)\backslash B_{\frac{r}{4}}(y) \cup
B_{\frac{r}{4}}(z)}\frac{r^{\alpha}}{|x-z|^{\alpha}}+\frac{|x-y|^{\alpha}}{
|x-z|^{2\alpha}}r^{ \alpha} {\rm d}x
  \lesssim \int_{B_{2r}(\xi)\backslash B_{\frac{r}{4}}(y) \cup
B_{\frac{r}{4}}(z) } {\rm d}x\lesssim r^2.
\end{align*}
It is clear to note for $x\in \Omega\backslash B_{2r}(\xi)$ that
\[
\left|\frac{|x-\xi|}{|x-y|}-1\right|\leq \frac{|\xi-y|}{|x-y|}\leq
\frac{r/2}{2r-r/2}=\frac{1}{3}. 
\]
We have 
\[
\frac23 |x-y|\leq |x-\xi|\leq \frac43|x-y|
\]
and
\[
\frac23|x-z|\leq |x-\xi|\leq \frac43|x-z|,
\]
which give
\begin{align*}
I_4 &\lesssim \int_{\Omega\backslash B_{2r}(\xi)}
\frac{r^{\alpha}}{|x-z|^{\alpha}}+\frac{|x-y|^{\alpha}}{|x-z|^{2\alpha}}r^{
\alpha} {\rm d}x\\
& \lesssim \int_{\Omega\backslash B_{2r}(\xi)}
\frac{r^{\alpha}}{|x-\xi|^{\alpha}}   {\rm d}x\\
&\lesssim r^{\alpha}\int_{2r}^{R}s^{1-\alpha}ds\lesssim
r^{\alpha}\Big[R^{2-\alpha}+(2r)^{2-\alpha}\Big]\lesssim r^{\alpha}+r^{2},
\end{align*}
where $\bar{\Omega}\subset B_{R}(\xi)$ with $B_{R}(\xi)$ being the disc with
radius $R$ and center at $\xi$. Combining all the estimates in step 2 gives
$T_3\lesssim |y-z|^{\min\{\alpha,\,2\}}$.

The proof is completed by combining step 1 and step 2.
\end{proof}

\subsection{Stochastic direct problem}

In this section, we discuss the solution for the stochastic direct source
scattering problem:
\begin{equation}\label{sp}
 \begin{cases}
   \mu\Delta \boldsymbol{u} + (\lambda+\mu) \nabla\nabla\cdot\boldsymbol{u}
+\omega^2 \boldsymbol{u}=\boldsymbol{g}+ \boldsymbol{h}\dot{W}_x\quad&
\text{in}~\mathbb{R}^2,\\[2pt]
\partial_r \boldsymbol{u}_{\rm p}-{\rm i}\kappa_{\rm
p}\boldsymbol{u}_{\rm p}=o(r^{-1/2})\quad &\text{as} ~ r\to\infty,\\[2pt]
\partial_r \boldsymbol{u}_{\rm s}-{\rm i}\kappa_{\rm
s}\boldsymbol{u}_{\rm s}=o(r^{-1/2})\quad & \text{as}~ r\to\infty.
 \end{cases}
\end{equation}

Let us first specify the regularity of $\boldsymbol{g}$ and $\boldsymbol{h} $
before discussing the solution of the stochastic scattering problem \eqref{sp}.
Motivated by the solution of the deterministic direct problem \eqref{dp}, we
assume that $\boldsymbol{g}\in L^2(D)^2$. The regularity of $\boldsymbol{h}$ is
chosen such that the stochastic integral
\[
 \int_D \mathbb{G}(x, y, \omega) \boldsymbol{h} (y) {\rm d}W_y
\]
satisfies
\begin{align*}
 {\bf E}\Bigl(\bigl|\int_D \mathbb{G}(x, y, \omega) \boldsymbol{h} (y){\rm
d}W_y\bigr|^2\Bigr)=\int_D \|\mathbb{G}(x, y, \omega)\boldsymbol{h}(y)\|^2  {\rm
d}y\\
\leq \int_{D}\|\mathbb{G}(x,y,\omega)\|^2\|\boldsymbol{h}(y)\|^{2}{\rm
d}x<\infty,
\end{align*}
where Proposition \ref{prop} is used in the above identity.

We only need to consider the singular part of the Green tensor function. It
follows from the H\"{o}lder inequality that
\[
 \int_D \Bigl|\log(|x-y|)|x-y|^{m}\Bigr|^2  \|\boldsymbol{h}(y)\|^{2}{\rm
d}y\leq\Bigl(\int_D \Bigl|\log(|x-y|)|x-y|^{m}\Bigr|^{\frac{2p}{p-2}} {\rm
d}y\Bigr)^{\frac{p-2}{p}} \left(\int_D \| \boldsymbol{h} (y)\|^p {\rm
d}y\right)^{\frac{2}{p}}.
\]
Since the first term on the right-hand side of the above inequality is a
singular integral, $p$ should be chosen such that it is well defined. Let
$\rho>0$ be sufficiently large such that $\bar{D}\subset B_\rho(x)$, where
$B_\rho(x)$ is the disc with radius $\rho$ and center at $x$. A simple
calculation yields
\begin{align*}
\int_D \Bigl|\log(|x-y|)|x-y|^{m}\Bigr|^{\frac{2p}{p-2}} {\rm
d}y\leq \int_{B_\rho(x)}\Bigl|\log(|x-y|)|x-y|^{m}\Bigr|^{\frac{2p}{p-2}} {\rm
d}y\\
\lesssim
\int_0^\rho r^{1+\frac{2mp}{p-2}}\Bigl|\log(r)\Bigr|^{\frac{2p}{p-2}}{\rm
d}r.
\end{align*}
It is clear to note that the above integral is well defined when $p>2$.

From now on, we assume that $h_j \in L^p(D), j=1, 2$ where $p\in (2, \infty]$.
Moreover, we require that $h_j \in C^{0, \eta}(D)$, i.e., $\eta$-H\"{o}lder
continuous, where $\eta\in(0, 1]$. The H\"{o}lder continuity will be used in the
analysis for existence of the solution.

The following theorem shows the well-posedness of the solution for the
stochastic scattering problem \eqref{sp}. The explicit solution will be used
to derive Fredholm integral equations for the inverse problem.

\begin{theorem}
 Let $\Omega\subset\mathbb{R}^2$ be a bounded domain. There exists a
unique continuous stochastic process $\boldsymbol{u}: \Omega\to\mathbb{C}$
satisfying
\begin{equation}\label{ms}
\boldsymbol{u}(x, \omega)=\int_D \mathbb{G}(x, y, \omega)\boldsymbol{g}(y){\rm
d}y+\int_D \mathbb{G}(x, y, \omega) \boldsymbol{h} (y){\rm d}W_y,\quad a.s.,
\end{equation}
which is called the mild solution of the stochastic scattering problem
\eqref{sp}.
\end{theorem}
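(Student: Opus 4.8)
The plan is to \emph{define} $\boldsymbol{u}(x,\omega)$ to be the right-hand side of \eqref{ms} and then establish three facts: that both integrals are well-defined for every $x\in\Omega$; that the resulting process admits a continuous modification on $\Omega$; and that any two continuous processes satisfying \eqref{ms} are indistinguishable. The first (deterministic) integral $\int_D\mathbb{G}(x,y,\omega)\boldsymbol{g}(y)\,{\rm d}y$ is finite by the Cauchy--Schwarz inequality together with Lemma \ref{gfe} and $\boldsymbol{g}\in L^2(D)^2$. Since $\boldsymbol{h}={\rm diag}(h_1,h_2)$ is deterministic, the integrand $y\mapsto\mathbb{G}(x,y,\omega)\boldsymbol{h}(y)$ of the second integral is a deterministic kernel, so by Proposition \ref{prop} the Wiener integral $\int_D\mathbb{G}(x,y,\omega)\boldsymbol{h}(y)\,{\rm d}W_y$ is a well-defined, centered, (complex-valued) Gaussian random vector with
\[
{\bf E}\Bigl|\int_D\mathbb{G}(x,y,\omega)\boldsymbol{h}(y)\,{\rm d}W_y\Bigr|^2=\int_D\|\mathbb{G}(x,y,\omega)\boldsymbol{h}(y)\|^2\,{\rm d}y<\infty,
\]
the finiteness being exactly the H\"older-inequality computation already carried out above, which uses $h_j\in L^p(D)$ with $p>2$ and the local integrability of $\bigl(|\log(|x-y|)|\,|x-y|^m\bigr)^{2p/(p-2)}$.

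For continuity I would apply Kolmogorov's continuity theorem on the planar domain $\Omega$, treating the two terms separately. Because the free-space Green tensor is symmetric, $\mathbb{G}(x,y,\omega)=\mathbb{G}(y,x,\omega)$, and depends only on $x-y$, the estimate \eqref{G2} of the previous lemma, applied on any bounded domain containing $D\cup\Omega$, holds after relabelling in the form
\[
\int_D\|\mathbb{G}(x,y,\omega)-\mathbb{G}(z,y,\omega)\|^{\alpha}\,{\rm d}y\lesssim|x-z|^{3/2},\qquad x,z\in\Omega,\ \alpha>\tfrac32.
\]
Combined with Cauchy--Schwarz and $\boldsymbol{g}\in L^2(D)^2$, the case $\alpha=2$ shows the deterministic term is H\"older continuous on $\Omega$ with exponent $3/4$. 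For the stochastic term, the It\^o isometry and the same H\"older argument as above, now applied to the difference kernel, give
\[
{\bf E}\Bigl|\int_D\bigl(\mathbb{G}(x,y,\omega)-\mathbb{G}(z,y,\omega)\bigr)\boldsymbol{h}(y)\,{\rm d}W_y\Bigr|^2=\int_D\bigl\|\bigl(\mathbb{G}(x,y,\omega)-\mathbb{G}(z,y,\omega)\bigr)\boldsymbol{h}(y)\bigr\|^2\,{\rm d}y\lesssim|x-z|^{\delta}
\]
for some $\delta\in(0,\tfrac32]$ depending on $p$ (with $\delta=\tfrac32$ when $h_j\in L^\infty$); the $L^p$ and H\"older regularity of $\boldsymbol{h}$ is what permits H\"older's inequality to be run against the logarithmic singularity of $\mathbb{G}$.

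Since the stochastic increment is a centered Gaussian vector, all of its moments are bounded by powers of its variance, hence ${\bf E}\bigl|\int_D(\mathbb{G}(x,y,\omega)-\mathbb{G}(z,y,\omega))\boldsymbol{h}(y)\,{\rm d}W_y\bigr|^{\beta}\lesssim|x-z|^{\beta\delta/2}$ for every $\beta>0$. Taking $\beta$ large enough that $\beta\delta/2>2$, Kolmogorov's continuity theorem provides a continuous modification of the stochastic term; adding back the already-continuous deterministic term yields a continuous process $\boldsymbol{u}$ satisfying \eqref{ms} almost surely for each fixed $x$. Uniqueness is then standard: a second continuous process $\boldsymbol{v}$ satisfying \eqref{ms} agrees with $\boldsymbol{u}$ almost surely at each $x$, hence almost surely at every point of a countable dense subset of $\Omega$, and continuity of both promotes this to $\boldsymbol{u}\equiv\boldsymbol{v}$ on $\Omega$ almost surely.

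The main obstacle is the stochastic term: one must convert the spatial-regularity estimate \eqref{G2} into an increment bound for a Wiener integral --- requiring both the symmetry and translation structure of $\mathbb{G}$ (to move \eqref{G2} into the integration variable) and the $L^p$/H\"older hypotheses on $\boldsymbol{h}$ (to push H\"older's inequality past the singularity) --- and then, because the resulting second-moment exponent $\delta\le\tfrac32$ falls short of $2$, exploit the Gaussianity of Wiener integrals to boost to an arbitrarily high moment, so that Kolmogorov's threshold $\beta\delta/2>\dim\Omega$ can be reached. The deterministic term and the uniqueness argument are routine by comparison.
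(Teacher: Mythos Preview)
Your proposal is correct, and the continuity argument---H\"older's inequality against $\boldsymbol{h}\in L^p$, the relabeled estimate \eqref{G2} (via the translation structure of $\mathbb{G}$), Gaussian hypercontractivity to boost moments, then Kolmogorov---is exactly what the paper does, down to the exponent $\delta=\frac{3p-6}{2p}$.

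The one genuine difference is in how existence is argued. You simply \emph{define} $\boldsymbol{u}$ by the right-hand side and check it is well-defined; the paper instead gives a \emph{constructive} proof. It builds piecewise-constant approximations $\dot{W}^n_x$ of the white noise on a triangulation of $D$, so that $\boldsymbol{h}\dot{W}^n\in L^2(D)^2$ and the regularized problem has a classical solution $\boldsymbol{u}^n$ given by the usual integral formula; it then shows $\boldsymbol{u}^n\to\boldsymbol{u}$ in $L^2(\Omega)$ almost surely. This convergence step is precisely where the H\"older-continuity hypothesis on $\boldsymbol{h}$ is used (to control $\|\boldsymbol{h}(y)-\boldsymbol{h}(z)\|$ on each triangle), whereas in your argument that hypothesis is never actually invoked. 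Your route is shorter and fully adequate for the theorem as stated; the paper's approximation argument buys something extra, namely the interpretation of the mild solution as a bona fide limit of solutions to deterministic Navier problems, which is what justifies calling \eqref{ms} a solution of the stochastic PDE \eqref{sp} rather than merely a well-defined random field.

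For uniqueness you run the standard countable-dense-set argument; the paper just says the formula \eqref{ms} depends only on $\mathbb{G},\boldsymbol{g},\boldsymbol{h}$. Both are fine.
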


\begin{proof}
 First we show that there exists a continuous modification of the random
field
\[
 \boldsymbol{v}(x, \omega)=\int_D \mathbb{G}(x, y, \omega) \boldsymbol{h}
(y){\rm d}W_y,\quad x\in\Omega.
\]
For any $x, z\in\Omega$, we have from Proposition \ref{prop} and the H\"{o}lder
inequality that
\begin{align*}
 {\bf E}(|\boldsymbol{v}(x, \omega)-\boldsymbol{v}(z, \omega)|^2)&=\int_D
\Big\|\Big(\mathbb{G}(x, y, \omega)-\mathbb{G}(z, y, \omega)\Big)
\boldsymbol{h}(y)\Big\|^{2}{\rm d}y\\
 &\leq \left(\int_D \|\mathbb{G}(x, y, \omega)-\mathbb{G}(z, y,
\omega)\|^{\frac{2p}{p-2}}{\rm d}y\right)^{\frac{p-2}{p}} \left(\int_D\|
\boldsymbol{h} (y)\|^p {\rm d}y\right)^{\frac{2}{p}}.
\end{align*}
For $p>2$, it follows from \eqref{G2} that
\[
 \int_D \|\mathbb{G}(x, y, \omega)-\mathbb{G}(z, y,
\omega)\|^{\frac{2p}{p-2}}{\rm d}y\lesssim|x-z|^{\frac{3}{2}},
\]
which gives
\[
 {\bf E}(|\boldsymbol{v}(x, \omega)-\boldsymbol{v}(z, \omega)|^2)\lesssim  \|
\boldsymbol{h} \|^2_{L^p(D)^2} |x-z|^{\frac{3p-6}{2p}}.
\]
Since $\boldsymbol{v}(x, \kappa)-\boldsymbol{v}(z, \kappa)$ is a random Gaussian
variable, we have (cf. \cite[Proposition 3.14]{H-09}) for any integer $q$ that
\[
 {\bf E}(|\boldsymbol{v}(x, \omega)-\boldsymbol{v}(z, \omega)|^{2q})\lesssim
\left({\bf E}(|\boldsymbol{v}(x,
\omega)-\boldsymbol{v}(z, \omega)|^2)\right)^q\lesssim\| \boldsymbol{h}
\|^{2q}_{L^p(D)^2}|x-z|^{\frac{q(3p-6)}{2p}}.
\]
Taking $q>\frac{2p}{3p-6}$, we obtain from Kolmogorov's
continuity theorem that there exists a P-a.s. continuous modification of the
random field $\boldsymbol{v}$.

Clearly, the uniqueness of the  mild solution comes from the solution
representation formula \eqref{ms}, which depends only on the Green function
$\mathbb{G}$ and the source functions $\boldsymbol{g}$ and $\boldsymbol{h}$.

Next we present a constructive proof to show the existence. We construct a
sequence of processes $\dot{W}^n_x$ satisfying $\boldsymbol{h} \dot{W}^n\in
L^2(D)^2$ and a sequence
\[
 \boldsymbol{v}^n(x, \omega)=\int_D \mathbb{G}(x, y, \omega) \boldsymbol{h}
(y){\rm d}W^n_y,\quad x\in\Omega,
\]
which satisfies $\boldsymbol{v}^n\to \boldsymbol{v}$ in $L^2(\Omega)$ a.s. as
$n\to\infty$.

Let $\mathcal{T}_n=\cup_{j=1}^n K_j$ be a regular triangulation of $D$, where
$K_j$ are triangles. Denote
\[
 \xi_j=|K_j|^{-\frac{1}{2}}\int_{K_j} {\rm d}W_x,\quad 1\leq j\leq n,
\]
where $|K_j|$ is the area of the $K_j$. It is known in \cite{CZZ-JCM08} that
$\xi_j$ is a family of independent identically distributed normal random
variables with mean zero and variance one. The piecewise constant
approximation sequence is given by
\[
 \dot{W}^n_x=\sum_{j=1}^n |K_j|^{-\frac{1}{2}}\xi_j \chi_j(x),
\]
where $\chi_j$ is the characteristic function of $K_j$. Clearly we have for any
$p\geq 1$ that
\begin{align*}
 {\bf E}\bigl(\|\dot{W}^n\|^p_{L^p(D)^2}\bigr)&={\bf E}\Bigl(\int_D
\Bigl|\sum_{j=1}^n |K_j|^{-\frac{1}{2}}\xi_j\chi_j(x)\Bigr|^p {\rm
d}x\Bigr)\lesssim {\bf E}\Bigl(\int_D \sum_{j=1}^n |K_j|^{-\frac{p}{2}}|\xi_j|^p
\chi_j(x){\rm d}x\Bigr)\\
&=\sum_{j=1}^n {\bf E}(|\xi_j|^p)|K_j|^{1-\frac{p}{2}} <\infty,
\end{align*}
which shows that $\dot{W}^n\in L^p(D)^2, p\geq 1$. It follows from the
H\"{o}lder inequality that $\boldsymbol{h} \dot{W}^n\in L^2(D)^2$.

Using Proposition \ref{prop}, we have that
\begin{align*}
 &{\bf E}\Bigl(\int_\Omega \Bigl|\int_D \mathbb{G}(x, y, \omega) \boldsymbol{h}
(y){\rm d}W_y-\int_D \mathbb{G}(x, y, \omega) \boldsymbol{h} (y) {\rm
d}W^n_y\Bigr|^2 {\rm d}x\Bigr)\\
&={\bf E}\Bigl(\int_\Omega \Bigl|\sum_{j=1}^n\int_{K_j} \mathbb{G}(x,
y, \omega) \boldsymbol{h} (y){\rm d}W_y-\sum_{j=1}^n |K_j|^{-1}
\int_{K_j}\mathbb{G}(x, z, \omega) \boldsymbol{h} (z){\rm d}z \int_{K_j} {\rm
d}W_y\Bigr|^2 {\rm d}x\Bigr)\\
&={\bf E}\Bigl(\int_\Omega\Bigl|\sum_{j=1}^n \int_{K_j}\int_{K_j}|K_j|^{-1}
(\mathbb{G}(x, y, \omega) \boldsymbol{h} (y)-\mathbb{G}(x, z, \omega)
\boldsymbol{h} (z)) {\rm d}z{\rm d}W_y\Bigr|^2{\rm d}x\Bigr)\\
&=\int_\Omega\Bigl(\sum_{j=1}^n\int_{K_j}\Bigl||K_j|^{-1}\int_{K_j}(\mathbb{G}(x
,y, \omega) \boldsymbol{h} (y)-\mathbb{G}(x, z, \omega) \boldsymbol{h} (z)){\rm
d}z \Bigr|^2 {\rm d}y\Bigr){\rm d}x\\
&\leq \int_\Omega \Bigl(\sum_{j=1}^n |K_j|^{-1}\int_{K_j}\int_{K_j}
\|\mathbb{G}(x, y, \omega) \boldsymbol{h} (y)-\mathbb{G}(x, z, \omega)
\boldsymbol{h} (z)\|^2{\rm d}z{\rm d}y\Bigr){\rm
d}x\\
&=\sum_{j=1}^n |K_j|^{-1}\int_{K_j}\int_{K_j}\int_\Omega
\|\mathbb{G}(x, y, \omega) \boldsymbol{h} (y)-\mathbb{G}(x, z, \omega)
\boldsymbol{h} (z)\|^2 {\rm d}x {\rm d}z{\rm d}y.
\end{align*}
Using the triangle and Cauchy-Schwartz inequalities, we get
\begin{align*}
 \int_\Omega \|\mathbb{G}(x, y, \omega) \boldsymbol{h} (y)-\mathbb{G}(x, z,
\omega) \boldsymbol{h} (z)\|^2{\rm d}x\lesssim \int_\Omega &\|\mathbb{G}(x, y,
\omega)-\mathbb{G}(x, z, \omega)\|^2\| \boldsymbol{h} (y)\|^2{\rm
d}x\\
&+\int_\Omega \|\mathbb{G}(x, z, \omega)\|^2 \| \boldsymbol{h} (y)-
\boldsymbol{h} (z)\|^2 {\rm d}x.
\end{align*}

It follows from \eqref{G2}, Lemma \ref{gfe}, and the
$\eta$-H\"{o}lder continuity of $ \boldsymbol{h} $ that
\[
 \int_\Omega \|\mathbb{G}(x, y, \omega) \boldsymbol{h} (y)-\mathbb{G}(x, z,
\omega) \boldsymbol{h} (z)\|^2{\rm d}x\lesssim 
\|\boldsymbol{h}(y)\|^{2}|y-z|^{\frac{3}{2}}+|y-z|^{2\eta},
\]
which gives
\begin{align*}
 &{\bf E}\Bigl(\int_\Omega \Bigl|\int_D \mathbb{G}(x, y, \omega) \boldsymbol{h}
(y){\rm d}W_y-\int_D \mathbb{G}(x, y, \omega) \boldsymbol{h} (y)
{\rm d}W^n_y\Bigr|^2 {\rm d}x\Bigr)\\
&\lesssim
\sum_{j=1}^n |K_j|^{-1}\int_{K_j}\int_{K_j}
\|\boldsymbol{h}(z)\|^{2}|y-z|^{\frac{3}{2}}{\rm
d}z{\rm d}y+\sum_{j=1}^n |K_j|^{-1}\int_{K_j}\int_{K_j} |y-z|^{2\eta}{\rm
d}z{\rm d}y\\
&\leq \|\boldsymbol{h} \|^2_{L^2(D)^2}\max_{1\leq j\leq n}({\rm
diam}K_j)^{\frac{3}{2}}+|D| \max_{1\leq j\leq n}({\rm
diam}K_j)^{2\eta}\to 0
\end{align*}
as $n\to\infty$ since the diameter of $K_j\to 0$ as $n\to\infty$.

For each $n\in\mathbb{N}$, we consider the scattering problem:
\begin{equation}\label{shen}
 \begin{cases}
   \mu\Delta \boldsymbol{u}^{n} + (\lambda+\mu)
\nabla\nabla\cdot\boldsymbol{u}^{n} +\omega^2
\boldsymbol{u}^{n}=\boldsymbol{g}+ \boldsymbol{h}\dot{W}_x^{n}\quad&
\text{in}~\mathbb{R}^2,\\[2pt]
\partial_r \boldsymbol{u}_{\rm p}^{n}-{\rm i}\kappa_{\rm
p}\boldsymbol{u}_{\rm p}^{n}=o(r^{-1/2})\quad & \text{as} ~r\to\infty,\\[2pt]
\partial_r \boldsymbol{u}_{\rm s}^{n}-{\rm i}\kappa_{\rm
s}\boldsymbol{u}_{\rm s}^{n}=o(r^{-1/2})\quad &\text{as} ~ r\to\infty. 
 \end{cases}
\end{equation}
It follows from $ \boldsymbol{h} \dot{W}^n_x\in L^2(D)^2$ that the problem
\eqref{shen} has a unique solution which is given by
\begin{equation}\label{shn}
 \boldsymbol{u}^n(x, \omega)=\int_D \mathbb{G}(x, y,
\omega)\boldsymbol{g}(y){\rm d}y+\int_D
\mathbb{G}(x, y, \omega) \boldsymbol{h} (y){\rm d}W^n_y.
\end{equation}
Since ${\bf E}(\|\boldsymbol{v}^n-\boldsymbol{v}\|^2_{L^2(\Omega)^2})\to 0$ as
$n\to\infty$, there exists a
subsequence still denoted as $\{\boldsymbol{v}^n\}$ which converges to
$\boldsymbol{v}$ a.s.. Letting $n\to\infty$ in \eqref{shn}, we obtain the mild
solution \eqref{ms} and complete the proof.
\end{proof}

It is clear to note that the mild solution of the stochastic direct problem
\eqref{ms} reduces to the solution of the deterministic direct problem
\eqref{sdp} when $ \boldsymbol{h} =0$, i.e., no randomness is present in the
source.

\section{Stochastic inverse problem}

In this section, we derive the Fredholm integral equations and present
a regularized Kaczmarz method to solve the stochastic inverse problem
by using multiple frequency data.

\subsection{Integral equations}

Recall the mild solution of the stochastic direct scattering problem at
angular frequency $\omega_k$:
\begin{equation}\label{sd}
\boldsymbol{u}(x, \omega_k)=\int_D \mathbb{G}(x, y,
\omega_k)\boldsymbol{g}(y){\rm d}y + \int_D
\mathbb{G}(x, y, \omega_k) \boldsymbol{h} (y){\rm d}W_y.
\end{equation}
Taking the expectation on both sides of \eqref{sd} and using the identity 
\[
 {\bf E}\Bigl(\int_D \mathbb{G}(x, y, \omega_k) \boldsymbol{h} (y){\rm
d}W_y\Bigr)=0,
\]
we obtain
\[
 {\bf E}(\boldsymbol{u}(x, \omega_k))=\int_D \mathbb{G}(x, y,
\omega_k)\boldsymbol{g}(y){\rm d}y,
\]
which is a complex-valued Fredholm integral equation of the first kind and may
be used to reconstruct $\boldsymbol{g}$. However, it is more convenient to solve
real-valued equations. We shall split all the complex-valued quantities into
their real and imaginary parts, which also allows us to deduce the equations for
the variance.

Let $\boldsymbol{u}={\rm Re}\boldsymbol{u}+{\rm i}{\rm Im}\boldsymbol{u}$ and
$\mathbb{G}={\rm Re}\mathbb{G}+{\rm i}{\rm Im}\mathbb{G}$. Using \eqref{gf}, we
have more explicit formulas of $G_1$ and $G_2$:
\[
G_{1}(v)=\frac{\rm i}{4\mu}H_{0}^{(1)}(\kappa_{\rm s}v)-\frac{\rm i}{4\omega^2v}
\Big[\kappa_{\rm s}H_{1}^{(1)}(\kappa_{\rm s}v)-\kappa_{\rm
p}H_{1}^{(1)}(\kappa_{\rm p} v)\Big]
\]
and
\[
G_{2}(v)=\frac{\rm
i}{4\omega^2}\Big[\frac{2\kappa_{\rm
s}}{v}H_{1}^{(1)}(\kappa_{\rm s}v)-\kappa_{\rm s}^{2}
H_{0}^{(1)}(\kappa_{\rm s}v)-\frac{2\kappa_{\rm
p}}{v}H_{1}^{(1)}(\kappa_{\rm p}v)+
\kappa_{\rm p}^{2} H_{0}^{(1)}(\kappa_{\rm p}v)\Big],
\]
where $H_{0}^{(1)}$ and $H_{1}^{(1)}$ are the Hankel functions of the first
kind with order zero and one, respectively.

Denote 
\[
\kappa_{{\rm p}, k}=\frac{\omega_k}{\sqrt{\lambda+2\mu}}, \quad
\kappa_{{\rm s}, k}=\frac{\omega_k}{\sqrt{\mu}}.
\]
Let
\[
  {\rm Re}\mathbb{G}(x,y,\omega_k)=\begin{bmatrix}
    G_{\rm Re}^{[11]}(x,y,\omega_k)  &  G_{\rm Re}^{[12]}(x,y,\omega_k)\\[2pt]
    G_{\rm Re}^{[21]}(x,y,\omega_k)  &  G_{\rm Re}^{[22]}(x,y,\omega_k)
  \end{bmatrix},
\]
where
\[
  \begin{split}
G_{\rm
Re}^{[11]}(x,y,\omega_k)=&-\frac{1}{4\mu}Y_{0}(\kappa_{{\rm s},k}|x-y|)+\frac{1}
{4\omega_k^2|x-y|}\Bigl(\kappa_{{\rm s}, k}Y_{1}(\kappa_{{\rm
s},k}|x-y|)\\
&-\kappa_{{\rm p},k}Y_1(\kappa_{{\rm
p},k}|x-y|)\Bigr)-\frac{(x_1-y_1)^2}{4\omega_k^2|x-y|^2}\Bigl[\frac{2\kappa_{{
\rm s},k}}{|x-y|}Y_1(\kappa_{{\rm
s},k}|x-y|)\\
&-\kappa_{{\rm s},k}^2 Y_{0}(\kappa_{{\rm s},k}|x-y|)
-\frac{2\kappa_{{\rm p},k}}{|x-y|}Y_{1}(\kappa_{{\rm p},k}|x-y|)+\kappa_{{\rm
p},k}^2 Y_{0} (\kappa_{{\rm p},k}|x-y|)\Bigr],\\
G_{\rm Re}^{[12]}(x,y,\omega_k)=&-\frac{(x_1-y_1)(x_2-y_2)}{4\omega_k^2|x-y|^2}
\Bigl[ \frac{2\kappa_{{\rm s},k}}{|x-y|}Y_{1}(\kappa_{{\rm s},k}|x-y|)
  -\kappa_{{\rm s},k}^2 Y_{0}(\kappa_{{\rm s},k}|x-y|)\\
  &-\frac{2\kappa_{{\rm
p},k}}{|x-y|}Y_{1}(\kappa_{{\rm p},k}|x-y|)+\kappa_{{\rm p},k}^2 Y_{0}
(\kappa_{{\rm p},k}|x-y|)\Bigr],\\
  G_{\rm Re}^{[21]}(x,y,\omega_k)=&G_{\rm Re}^{[12]}(x,y,\omega_k),\\
G_{\rm
Re}^{[22]}(x,y,\omega_k)=&-\frac{1}{4\mu}Y_{0}(\kappa_{{\rm s},k}|x-y|)+\frac{1}
{4\omega_k^2|x-y|}\Bigl(\kappa_{{\rm
s},k}Y_{1}(\kappa_{{\rm s},k}|x-y|)\\
&-\kappa_{{\rm p},k}Y_1(\kappa_{{\rm p},k}|x-y|)\Bigr)
-\frac{(x_2-y_2)^2}{4\omega_k^2|x-y|^2}\Bigl[\frac{2\kappa_{{\rm
s},k}}{|x-y|}Y_1(\kappa_{{\rm
s},k}|x-y|)\\
&-\kappa_{{\rm s},k}^2 Y_{0}(\kappa_{{\rm s},k}|x-y|)
-\frac{2\kappa_{{\rm p},k}}{|x-y|}Y_{1}(\kappa_{{\rm p},k}|x-y|)+\kappa_{{\rm
p},k}^2 Y_{0} (\kappa_{{\rm p},k}|x-y|)\Bigr],
  \end{split}
\]
and
\[
  {\rm Im}\mathbb{G}(x,y,\omega_k)=\begin{bmatrix}
    G_{\rm Im}^{[11]}(x,y,\omega_k)  &  G_{\rm Im}^{[12]}(x,y,\omega_k)\\[2pt]
    G_{\rm Im}^{[21]}(x,y,\omega_k)  &  G_{\rm Im}^{[22]}(x,y,\omega_k)
  \end{bmatrix},
\]
where
\[
  \begin{split}
    G_{\rm
Im}^{[11]}(x,y,\omega_k)=&\frac{1}{4\mu}J_{0}(\kappa_{{\rm s},k}|x-y|)-\frac{1}{
4\omega_k^2|x-y|}\Bigl(\kappa_{{\rm
s},k}J_{1}(\kappa_{{\rm s},k}|x-y|)\\
&-\kappa_{{\rm p},k}J_1 (\kappa_{{\rm p},k}
|x-y|)\Bigr)+\frac{(x_1-y_1)^2}{4\omega_k^2|x-y|^2}\Bigl[\frac{2\kappa_{{\rm
s},k}}{|x-y|}J_1(\kappa_{{\rm
s},k}|x-y|)\\
&-\kappa_{{\rm s},k}^2 J_{0}(\kappa_{{\rm s},k}|x-y|)-\frac{2\kappa_{{\rm
p},k}}{|x-y|}J_{1}(\kappa_{{\rm p},k}|x-y|)+\kappa_{{\rm p},k}^2 J_{0}
(\kappa_{{\rm p},k}|x-y|)\Bigr],\\
  G_{\rm
Im}^{[12]}(x,y,\omega_k)=&\frac{(x_1-y_1)(x_2-y_2)}{4\omega_k^2|x-y|^2}\Bigl[
\frac{2\kappa_{{\rm s},k}}{|x-y|}J_{1}(\kappa_{{\rm s},k}|x-y|)
  -\kappa_{{\rm s},k}^2 J_{0}(\kappa_{{\rm s},k}|x-y|)\\
  &-\frac{2\kappa_{{\rm
p},k}}{|x-y|}J_{1}(\kappa_{{\rm p},k}|x-y|)+\kappa_{{\rm p},k}^2 J_{0}
(\kappa_{{\rm p},k}|x-y|)\Bigr],\\
  G_{\rm Im}^{[21]}(x,y,\omega_k)=&G_{\rm Im}^{[12]}(x,y,\omega_k),\\
  G_{\rm
Im}^{[22]}(x,y,\omega_k)=&\frac{1}{4\mu}J_{0}(\kappa_{{\rm s},k}|x-y|)-\frac{1}{
4\omega_k^2|x-y|}\Bigl(\kappa_{{\rm
s},k}J_{1}(\kappa_{{\rm s},k}|x-y|)\\
&-\kappa_{{\rm p},k}J_1 (\kappa_{{\rm
p},k}|x-y|)\Bigr)+\frac{(x_2-y_2)^2}{4\omega_k^2|x-y|^2}\Bigl[\frac{2\kappa_{{
\rm s},k}}{|x-y|}J_1(\kappa_{{\rm
s},k}|x-y|)\\
&-\kappa_{{\rm s},k}^2 J_{0}(\kappa_{{\rm s},k}|x-y|)-\frac{2\kappa_{{\rm
p},k}}{|x-y|}J_{1}(\kappa_{{\rm p},k}|x-y|)+\kappa_{{\rm p},k}^2 J_{0}
(\kappa_{{\rm p},k}|x-y|)\Bigr].
\end{split}
\]
Here $J_0$, $Y_0$ and $J_1$,  $Y_1$ are the Bessel function of the first and
second kind with
order zero and order 1, respectively. Clearly, the matrices ${\rm
Re}\mathbb{G}(x,y,\omega_k)$ and ${\rm Im}\mathbb{G}(x,y,\omega_k)$ are
symmetric.

The mild solution \eqref{sd} can be split into the real and imaginary
parts:
\begin{equation}\label{sdr}
{\rm Re}\boldsymbol{u}(x, \omega_k)=\int_D {\rm Re}\mathbb{G}(x, y,
\omega_k)\boldsymbol{g}(y){\rm d}y
+ \int_D {\rm Re}\mathbb{G}(x, y, \omega_k) \boldsymbol{h} (y){\rm d}W_y
\end{equation}
and
\begin{equation}\label{sdi}
{\rm Im}\boldsymbol{u}(x, \omega_k)=\int_D {\rm Im}\mathbb{G}(x, y,
\omega_k)\boldsymbol{g}(y){\rm d}y +
\int_D{\rm Im}\mathbb{G}(x, y, \omega_k) {\bf h} (y){\rm d}W_y.
\end{equation}
Noting
\[
 {\bf E}\Bigl(\int_D{\rm Re}\mathbb{G}(x, y,
\omega_k) \boldsymbol{h} (y){\rm d}W_y\Bigr)=0\quad\text{and}\quad {\bf
E}\Bigl(\int_D{\rm Im}\mathbb{G}(x, y, \omega_k) \boldsymbol{h} (y){\rm
d}W_y\Bigr)=0,
\]
we take the expectation on both sides of \eqref{sdr} and \eqref{sdi} and obtain
real-valued Fredholm integral equations of the first kind to reconstruct
$\boldsymbol{g}$:
\begin{align*}
& {\bf E}({\rm Re}\boldsymbol{u}(x, \omega_k))=\int_D {\rm Re}\mathbb{G}(x, y,
\omega_k) \boldsymbol{g}(y){\rm d}y,\\
& {\bf E}({\rm Im}\boldsymbol{u}(x, \omega_k))=\int_D {\rm Im}\mathbb{G}(x, y,
\omega_k)\boldsymbol{g}(y){\rm d}y,
\end{align*}
which are equivalent to the following equations in component-wise forms:
\begin{align}
  & {\bf E} ( {\rm Re} u_1( x, \omega_k) )=\int_{D} \Big[ G_{\rm Re}^{[11]}( x,
y,\omega_k )g_1(y)+G_{\rm Re}^{[12]}( x, y,\omega_k )g_2(y)\Big]{\rm
d}y,\label{rgr1}\\
  & {\bf E} ( {\rm Re} u_2( x, \omega_k) )=\int_{D} \Big[ G_{\rm Re}^{[21]}( x,
y,\omega_k )g_1(y)+G_{\rm Re}^{[22]}( x, y,\omega_k )g_2(y)\Big]{\rm
d}y,\label{rgr2}\\
  & {\bf E} ( {\rm Im} u_1( x, \omega_k) )=\int_{D} \Big[ G_{\rm Im}^{[11]}( x,
y,\omega_k )g_1(y)+G_{\rm Im}^{[12]}( x, y,\omega_k )g_2(y)\Big]{\rm
d}y,\label{rgi1}\\
  & {\bf E} ( {\rm Im} u_2( x, \omega_k) )=\int_{D} \Big[ G_{\rm Im}^{[21]}( x,
y,\omega_k )g_1(y)+G_{\rm Im}^{[22]}( x, y,\omega_k )g_2(y)\Big]{\rm
d}y.\label{rgi2}
\end{align}

\begin{figure}
\centering
\includegraphics[width=0.45\textwidth]{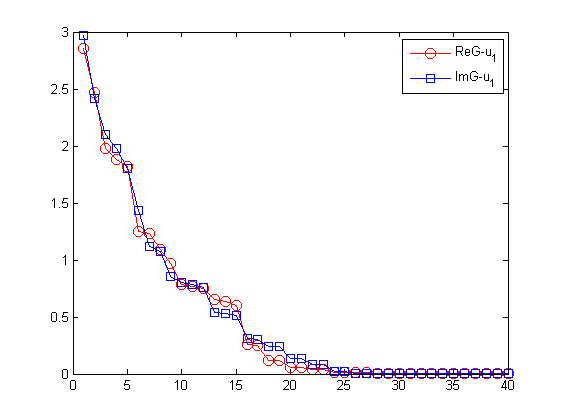}
\includegraphics[width=0.45\textwidth]{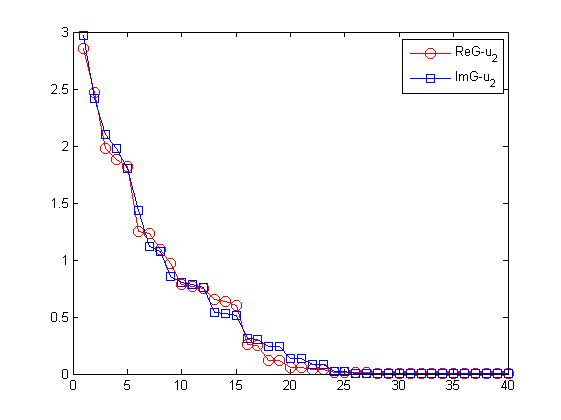}
\caption{Singular values of the Fredholm integral equations for the
reconstruction of $\boldsymbol{g}$: (left) component for $u_1$; (right)
component for $u_2$.}
\label{svdg}
\end{figure}

It is known that Fredholm integral equations of the first kind are ill-posed
due to the rapidly decaying singular values of matrices from the discretized
integral kernels. Appropriate regularization methods are needed to recover the
information about the solutions as stably as possible.
As a representative example, Figure \ref{svdg} plots the singular values of the
matrices for the Fredholm integral equations \eqref{rgr1}--\eqref{rgi2} at
$\omega=1.9\pi$. We can observe similar decaying patterns for the singular
values of \eqref{rgr1} and \eqref{rgi1} for the component $u_1$, and of
\eqref{rgr2} and \eqref{rgi2} for the component $u_2$.

To reconstruct the variance $\boldsymbol{h}^2$,  we use Proposition \ref{propn}
to obtain 
\[
\begin{split}
 &{\bf E}\Bigl(\Bigl|\int_D {\rm Re}\mathbb{G}(x, y, \omega_k) \boldsymbol{h}
(y){\rm d}W_y\Bigr|^2\Bigr)
=\int_D \|{\rm Re}\mathbb{G}(x, y, \omega_k) \boldsymbol{h} (y)\|^{2} {\rm
d}y\\
=&\int_{D}\Big[(G_{\rm Re}^{[11]}(x,y,\omega_k))^2+(G_{\rm
Re}^{[21]}(x,y,\omega_k))^2\Big]h_{1}^{2}(y){\rm d}y\\
&+\int_{D}\Big[(G_{\rm Re}^{[12]}(x,y,\omega_k))^2+(G_{\rm
Re}^{[22]}(x,y,\omega_k))^2\Big]h_{2}^{2}(y){\rm d}y
\end{split}
\]
and
\[
\begin{split}
 &{\bf E}\Bigl(\Bigl|\int_D {\rm Im}\mathbb{G}(x, y, \omega_k) \boldsymbol{h}
(y){\rm d}W_y \Bigr|^2\Bigr)
=\int_D \|{\rm Im}\mathbb{G}(x, y, \omega_k) \boldsymbol{h} (y)\|^{2} {\rm
d}y\\
=&\int_{D}\Big[(G_{\rm Im}^{[11]}(x,y,\omega_k))^2+(G_{\rm
Im}^{[21]}(x,y,\omega_k))^2\Big]h_{1}^{2}(y){\rm d}y\\
&+\int_{D}\Big[(G_{\rm Im}^{[12]}(x,y,\omega_k))^2+(G_{\rm
Im}^{[22]}(x,y,\omega_k))^2\Big]h_{2}^{2}(y){\rm d}y.
\end{split}
\]
Taking the variance on both sides of \eqref{sdr} and \eqref{sdi}, we get
\begin{align*}
 {\bf V}({\rm Re}\boldsymbol{u}(x, \omega_k))=&\int_{D}\Big[(G_{\rm
Re}^{[11]}(x,y,\omega_k))^2+(G_{\rm
Re}^{[21]}(x,y,\omega_k))^2\Big]h_{1}^{2}(y){\rm d}y\\
&+\int_{D}\Big[(G_{\rm Re}^{[12]}(x,y,\omega_k))^2+(G_{\rm
Re}^{[22]}(x,y,\omega_k))^2\Big]h_{2}^{2}(y){\rm d}y,\\
{\bf V}({\rm Im}\boldsymbol{u}(x, \omega_k))=&\int_{D}\Big[(G_{\rm
Im}^{[11]}(x,y,\omega_k))^2+(G_{\rm
Im}^{[21]}(x,y,\omega_k))^2\Big]h_{1}^{2}(y){\rm d}y\\
&+\int_{D}\Big[(G_{\rm Im}^{[12]}(x,y,\omega_k))^2+(G_{\rm
Im}^{[22]}(x,y,\omega_k))^2\Big]h_{2}^{2}(y){\rm d}y,
\end{align*}
which are the Fredholm integral equations of the first kind to reconstruct the
variance. Again, we consider the variance of components $u_1$ and $u_2$:
\begin{align}
  {\bf V}({\rm Re}u_1(x, \omega_k)) &= \int_{D}\Big[\big(G_{\rm
Re}^{[11]}(x,y,\omega_k)\big)^2 h_1^2(y)
  +\big(G_{\rm Re}^{[12]}(x,y,\omega_k)\big)^2h_2^2(y)\Big]{\rm
d}y,\label{rhr1}\\
  {\bf V}({\rm Re}u_2(x, \omega_k)) &= \int_{D}\Big[\big(G_{\rm
Re}^{[21]}(x,y,\omega_k)\big)^2 h_1^2(y)
  +\big(G_{\rm Re}^{[22]}(x,y,\omega_k)\big)^2h_2^2(y)\Big]{\rm
d}y,\label{rhr2}\\
  {\bf V}({\rm Im}u_1(x, \omega_k)) &= \int_{D}\Big[\big(G_{\rm
Im}^{[11]}(x,y,\omega_k)\big)^2 h_1^2(y)
  +\big(G_{\rm Im}^{[12]}(x,y,\omega_k)\big)^2 h_2^2(y)\Big]{\rm
d}y,\label{rhi1}\\
  {\bf V}({\rm Im}u_2(x, \omega_k)) &= \int_{D}\Big[\big(G_{\rm
Im}^{[21]}(x,y,\omega_k)\big)^2 h_1^2(y)
  +\big(G_{\rm Im}^{[22]}(x,y,\omega_k)\big)^2h_2^2(y)\Big]{\rm d}y. 
\label{rhi2}
\end{align}

To investigate ill-posedness of the above four equations, we plot their
singular values in Figure \ref{svdh}. It can be seen that
\eqref{rhr1}, \eqref{rhi1} and \eqref{rhr2}, \eqref{rhi2}  show almost
identical distributions of the singular values for components $u_1$ and $u_2$, 
respectively. The singular values decay exponentially to zeros and there is a
big gap between the few leading singular values and the
rests. Hence it is severely ill-posed to use directly either \eqref{rhr1} or
\eqref{rhi1} and \eqref{rhr2} or \eqref{rhi2} to reconstruct $h_1^2$ and
$h_2^2$. Subtracting \eqref{rhi1} from \eqref{rhr1} and \eqref{rhi2} from
\eqref{rhr2}, we obtain the improved equations to reconstruct
$h_1^2$ and $h_2^2$:
\begin{align}
 \label{rh2d}{\bf V}({\rm Re}u_1(x, \omega_k))-{\bf V}({\rm
Im}u_1(x, \omega_k))=\int_{D}\Big[\big(G_{\rm
Re}^{[11]}(x,y,\omega_k)\big)^2-\big(G_{\rm
Im}^{[11]}(x,y,\omega_k)\big)^2\Big]h_1^2(y){\rm d}y\nonumber\\
  +\int_{D}\Big[\big(G_{\rm Re}^{[12]}(x,y,\omega_k)\big)^2-\big(G_{\rm
Im}^{[12]}(x,y,\omega_k)\big)^2\Big]h_2^2(y){\rm d}y,\\
\label{rh3d} {\bf V}({\rm Re}u_2(x, \omega_k))-{\bf V}({\rm
Im}u_2(x, \omega_k))=\int_{D}\Big[\big(G_{\rm
Re}^{[21]}(x,y,\omega_k)\big)^2-\big(G_{\rm
Im}^{[21]}(x,y,\omega_k)\big)^2\Big]h_1^2(y){\rm d}y\nonumber\\
  +\int_{D}\Big[\big(G_{\rm Re}^{[22]}(x,y,\omega_k)\big)^2-\big(G_{\rm
Im}^{[22]}(x,y,\omega_k)\big)^2\Big]h_2^2(y){\rm d}y.
\end{align}
In fact, it is clear to note in Figure \ref{svdh} that the singular values of
\eqref{rh2d} and \eqref{rh3d} display better behavior that those
of \eqref{rhr1}, \eqref{rhi1} and \eqref{rhr2}, \eqref{rhi2}. The singular
values decay more slowly and distribute more uniformly. Numerically,
\eqref{rh2d} and \eqref{rh3d} do give much better reconstructions. We will only
show the results for \eqref{rh2d} and \eqref{rh3d} in the numerical experiments.

\begin{figure}
\centering
\includegraphics[width=0.45\textwidth]{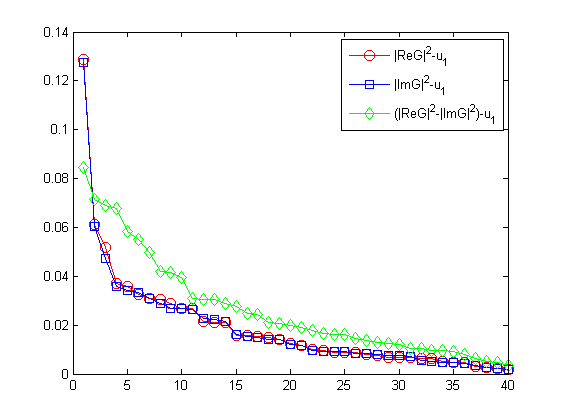}
\includegraphics[width=0.45\textwidth]{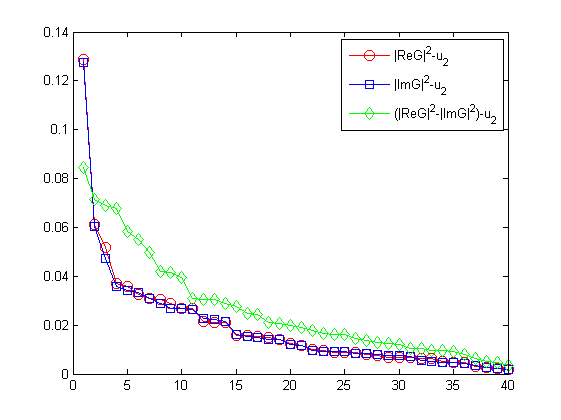}
\caption{Singular values of the Fredholm integral equations for the
reconstruction of $ \boldsymbol{h} ^2$: (left) component of $u_1$; (right)
component of $u_2$.}
\label{svdh}
\end{figure}

\subsection{Numerical method}

In this section, we present a regularized Kaczmarz method to solve the
ill-posed integral equations. The classical Kaczmarz method is an iterative
method for solving linear systems of algebraic equations \cite{N-86}.

Consider the following operator equations
\begin{equation}\label{ae}
 A_k q=p_k,\quad k=1, \dots, K,
\end{equation}
where the index $k$ is for different frequency, $q$ represents the unknown
$g_1, g_2$ or $h_1^2, h_2^2$, and $p_k$ is the given data. Given an arbitrary
initial guess $q^0$, the classical Kaczmarz method for solving \eqref{ae} reads:
For $l=0, 1, \dots, L$,
\begin{equation}\label{ckm}
 \begin{cases}
  q_0 = q^l,\\
  q_k=q_{k-1}+A^*_k (A_k A^*_k)^{-1}(p_k-A_k q_{k-1}),\quad k=1, \dots, K,\\
  q^{l+1}=q_K,
 \end{cases}
\end{equation}
where $A_k^*$ is the adjoint operator of $A_k$. In \eqref{ckm}, there are two
loops: the outer loop is carried for iterative index $l$ and the inner loop is
done for the different frequency $\omega_k$. In practice, the operator $A_k
A^*_k$ may not be invertible or is bad conditioned even if it is invertible. A
regularization technique is needed.

We present a regularized Kaczmarz method: Given an arbitrary
initial guess $q^0$,
\begin{equation}\label{rkm}
 \begin{cases}
  q_0 = q^l,\\
  q_k=q_{k-1}+A^*_k (\gamma I + A_k A^*_k)^{-1}(p_k-A_k q_{k-1}),\quad k=1,
\dots, K,\\
  q^{l+1}=q_m,
 \end{cases}
\end{equation}
for $l=0, 1, \dots, L$, where $\gamma>0$ is the regularization parameter and $I$
is the identity operator. Although there are two loops in \eqref{rkm}, the
operator $\gamma I+A_k A^*_k$ leads to a small scale linear system of equations
with the size equal to the number of measurements. Moreover, they essentially
need to be solved only $K$ times by a direct solver such as the LU decomposition
since $A_k$ keep the same during the outer loop.

\section{Numerical experiments}

In this section, we present a numerical example to demonstrate the validity
and effectiveness of the proposed method. The scattering data is obtained by the
numerical solution of the stochastic Navier equation instead of the numerical
integration of the Fredholm integral equations in order to avoid the so-called
inverse crime. Although the stochastic Navier equation may be efficiently solved
by using the Wiener Chaos expansions to obtain statistical moments such as the
mean and variance \cite{BAZC-MMS10}, we choose the Monte Carlo method to
simulate the actual process of measuring data. In each realization, the
stochastic Navier equation is solved by using the finite element method with the
perfectly matched layer (PML) technique. After all the realizations are done, we
take an average of the solutions and use it as an approximated scattering data
to either the mean or the variance. It is clear to note that the data is more
accurate as more number of realizations is taken.

\begin{figure}
\centering
\includegraphics[width=0.45\textwidth]{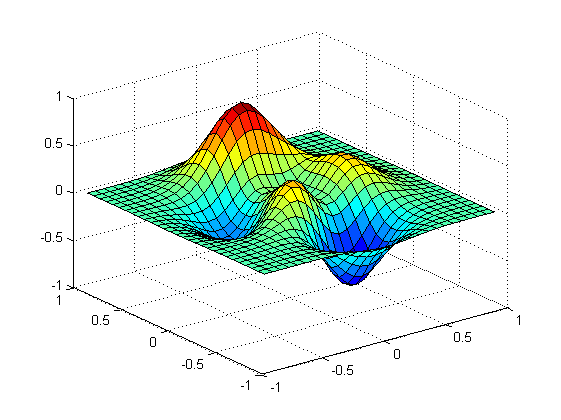}
\includegraphics[width=0.45\textwidth]{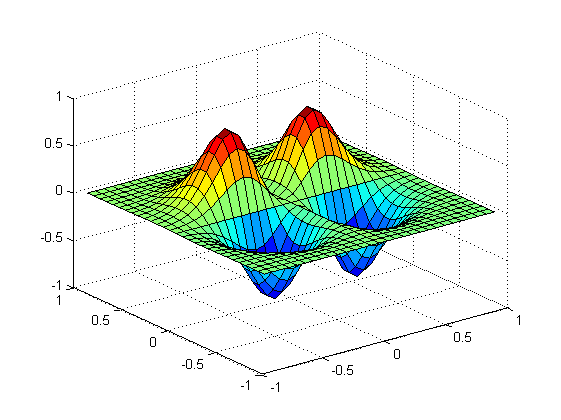}
\includegraphics[width=0.45\textwidth]{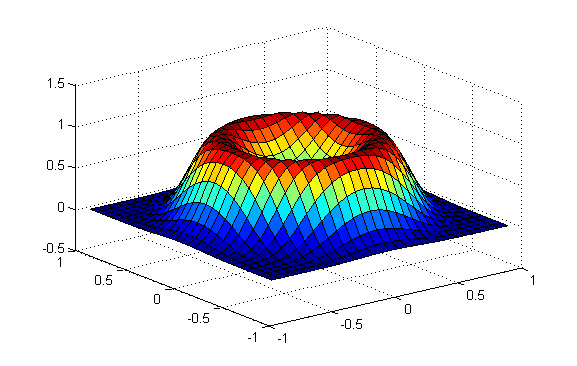}
\includegraphics[width=0.45\textwidth]{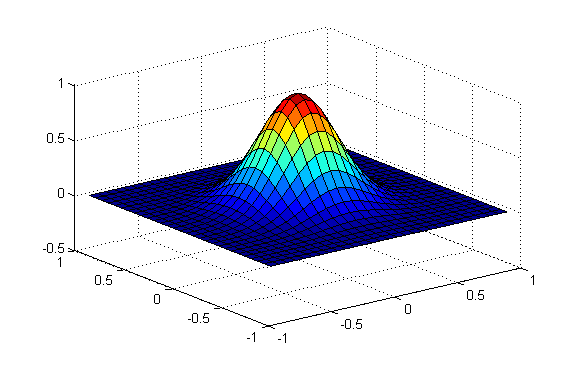}
\caption{The exact source: (top) surface plot of the exact mean
$g_1$ and $g_2$; (below) surface plot of the exact variance $h_1^2$ and
$h_2^2$.}
\label{gtht2d}
\end{figure}

Let
\[
g_{1}(x_1, x_2)=0.3(1-x_1)^2
e^{-x_1^2-(x_2+1)^2}-(0.2x_1-x_1^3-x_2^5)e^{-x_1^2-x_2^2} -0.03e^{
-(x_1+1)^2-x_2^2}
\]
and
\[
g_{2}(x_1, x_2)=5x_1^2 x_2 e^{-x_1^2-x_2^2}.
\]
We reconstruct the mean $\boldsymbol{g}$ given by
\[
\boldsymbol{g}(x_1, x_2)=(g_1(3x_1, 3x_2), g_{2}(3x_1,
3x_2))^\top
\]
inside the domain $D=[-1,\,1]\times[-1,\,1]$. Let
\[
  h_1 (x_1, x_2) = 0.6e^{-8(r^3-0.75r^2)}
\]
and
\[
  h_2 (x_1, x_2)= e^{-r^2},\quad r=(x_1^2+x_2^2)^{1/2}.
\]
We reconstruct the variance $ \boldsymbol{h}^2 $ given by
\[
\boldsymbol{h}^2 (x_1, x_2) = \begin{bmatrix} h^2_1(x_1, x_2) & 0\\ 0&
h^2_2(3x_1, 3x_2) \end{bmatrix}
\]
inside the domain $D=[-1,\,1]\times[-1,\,1]$. See Figure \ref{gtht2d} for the
surface plot of the exact $g_1, g_2$ (top) and $h_1^2, h_2^2$ (below). The
Lam\'{e} constants are $\mu=1.0$ and $\lambda=2.0$. The computational domain is
set to be $[-3,\,3]\times[-3,\,3]$ with the PML thickness $0.5$. After the
direct problem is solved and the value of $\boldsymbol{u}$ is obtained at the
grid points, the linear interpolation is used to generate the synthetic data at
40 uniformly distributed points on the circle with radius 2, i.e.,
$x_1=2\cos\theta_i, x_2=2\sin\theta_i, \theta_i=i\pi/20, i=0, 1, \dots, 39.$
Sixteen equally spaced frequencies from $0.5\pi$ to $7.5\pi$ are used in the
reconstruction of $g_1, g_2$, while twenty equally spaced frequencies from
$0.5\pi$ to $2.5\pi$ are used in the reconstruction of $h_1^2, h_2^2$. The
regularization parameter is $\gamma=1.0\times 10^{-7}$ and the total number of
the outer loop for the Kaczmarz method is $L=5$. Figure \ref{gnhn2d} shows the
reconstructed mean $g_1, g_2$ (top) and the reconstructed variance $h_1^2,
h_2^2$ (below) corresponding to the number of realization $10^4$.

\begin{figure}
\centering
\includegraphics[width=0.45\textwidth]{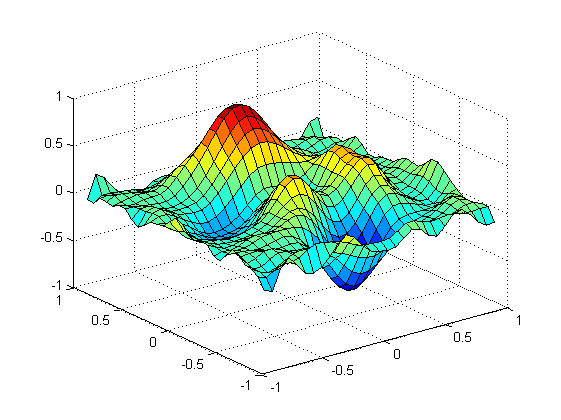}
\includegraphics[width=0.45\textwidth]{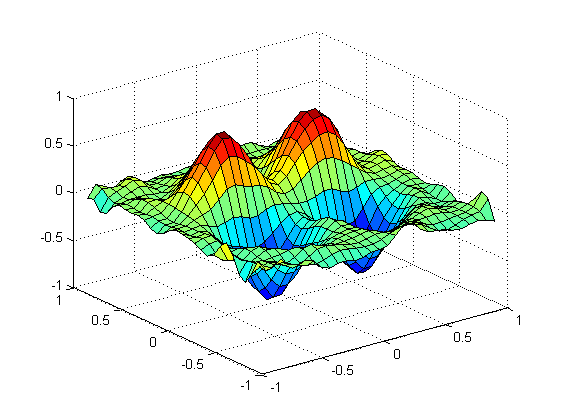}
\includegraphics[width=0.45\textwidth]{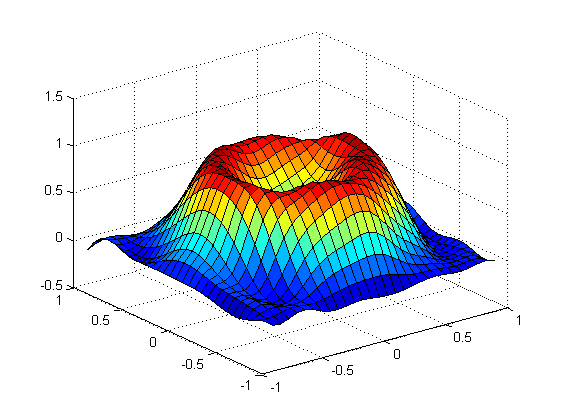}
\includegraphics[width=0.45\textwidth]{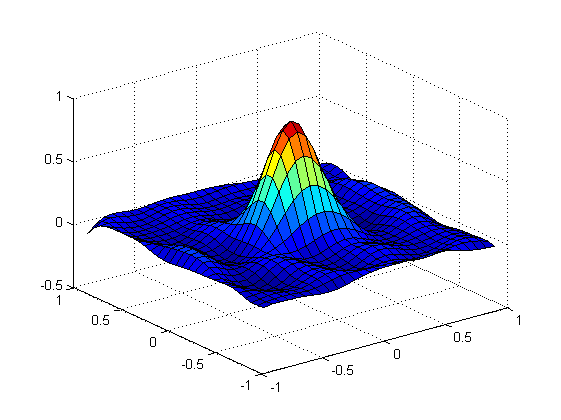}
\caption{The reconstructed source: (top) surface plot of the reconstructed
mean $g_1, g_2$; (below) surface plot of the reconstructed variance $h_1^2,
h_2^2$.}
\label{gnhn2d}
\end{figure}

\section{Conclusion}

We have studied the direct and inverse random source scattering problems for the
stochastic Navier equation where the source is driven by an additive white
noise. Under a suitable regularity assumption of the source functions
$\boldsymbol{g}$ and $ \boldsymbol{h} $, the direct scattering problem is shown
constructively to have a unique mild solution which was given explicitly as an
integral equation. Based on the explicit solution, Fredholm integral equations
are deduced for the inverse scattering problem to reconstruct the mean and the
variance of the random source. We have presented the regularized Kaczmarz method
to solve the ill-posed integral equations by using multiple frequency data.
A numerical example is presented to demonstrate the validity and effectiveness
of the proposed method. We are currently investigating the inverse random source
scattering problem in an inhomogeneous elastic medium where the explicit Green
tensor function is no longer available. Although this paper concerns the
inverse random source scattering problem for the Navier equation, we believe
that the proposed framework and methodology can be directly applied to solve
many other inverse random source problems and even more general stochastic
inverse problems. For instance, it is interesting to study inverse random source
problems for the stochastic Poisson, heat, or electromagnetic wave equation.
Obviously, it is more challenging to consider the inverse random medium
scattering problem where the medium should be modeled as a random function. We
hope to be able to report the progress on these problems in the future.

\appendix

\section{Brownian sheet}

Let us first briefly introduce the one-dimensional Brownian sheet, which is also
called one-dimensional $d$-parameter Brownian motion, on $(\mathbb{R}^d_+,\,
\mathcal{B}(\mathbb{R}^d_+),\,\mu)$, where $d\in\mathbb{N},
\mathbb{R}_+^d=\{x=(x_1, \dots, x_d)^\top\in\mathbb{R}^d:
x_j\geq 0, j=1,\dots, d\}$, $\mathcal{B}(\mathbb{R}^d_+)$ is the Borel
$\sigma$-algebra of $\mathbb{R}^d_+$, and $\mu$ is the Lebesgue measure. More
details can be found in \cite{W-86}. Let
$(0, x]=(0, x_1]\times\cdots\times(0, x_d]$ for $x\in\mathbb{R}^d_+$. 

\begin{definition}\label{bs}
The one-dimensional Brownian sheet on $\mathbb{R}^d_+$ is the process $\{W_x:
x\in\mathbb{R}^d_+\}$ defined by $W_x=W\{(0, x]\}$, where $W$ is a random set
function such that
\begin{enumerate}

\item $\forall A\in\mathcal{B}(\mathbb{R}^d_+)$, $W(A)$ is a
Gaussian random variable with mean 0 and variance $\mu(A)$, i.e., $W(A)\sim
\mathcal{N}(0, \mu(A))$;

\item $\forall A, B\in\mathcal{B}(\mathbb{R}^d_+)$, if $A\cap B=\emptyset$, then
$W(A)$ and $W(B)$ are independent and $W(A\cup B)=W(A)+W(B)$.

\end{enumerate}
\end{definition}

It can be verified from Definition \ref{bs} that
\[
{\bf E}(W(A)W(B))=\mu(A\cap B),\quad\forall A, B\in\mathcal{B}(\mathbb{R}^d_+),
\]
which gives the covariance function of the Brownian sheet:
\[
 {\bf E}(W_x W_y)=x\wedge y:=(x_1\wedge y_1)\cdots (x_d\wedge y_d)
\]
for any $x=(x_1, \dots, x_d)^\top\in\mathbb{R}^d_+$ and $y=(y_1, \dots,
y_d)^\top\in\mathbb{R}^d_+$, where $x_j\wedge y_j=\min\{x_j,\,y_j\}$.

The Brownian sheet can be generalized to the space $\mathbb{R}^d$ by introducing
$2^d$ independent Brownian sheets defined on $\mathbb{R}^d_+$. Define a
multi-index $t=(t_1, \dots, t_d)^\top$ with $t_j=\{1,\,-1\}$ for
$j=1, \dots, d$. Introduce $2^d$ independent Brownian sheets $\{W^t\}$
defined on $\mathbb{R}^d_+$. For any $x=(x_1, \dots, x_d)^\top\in\mathbb{R}^d$,
define the Brownian sheet
\[
W_x:=W^{t(x)}_{\breve{x}},
\]
where $\breve{x}=(|x_1|, \dots, |x_d|)^\top$ and $t(x)=({\rm sgn}(x_1),
\dots, {\rm sgn}(x_d))^\top$. The sign function ${\rm sgn}(x_j)=1$ if $x_j\geq
0$, otherwise ${\rm sgn}(x_j)=-1$.

In two or more parameters, the white noise can be thought of as the derivative
of the Brownian sheet. In fact, the Brownian sheet $W_x$ is
nowhere-differentiable
in the ordinary sense, but its derivatives will exist in the sense of Schwartz
distributions. Define
\[
\dot{W}_x=\frac{\partial^d W_x}{\partial x_1\cdots\partial x_d}.
\]
If $\phi(x)$ is a deterministic square-integrable complex-valued test function
with a compact support in $\mathbb{R}^d$, then $\dot{W}_x$ is the
distribution
\[
 \dot{W}_x(\phi)=(-1)^d \int_{\mathbb{R}^d} W_x \frac{\partial^d
\phi(x)}{\partial x_1\cdots\partial x_d}{\rm d}x.
\]
We may define the stochastic integral
\begin{equation}\label{si}
 \int_{\mathbb{R}^d}\phi(x){\rm d}W_x=(-1)^d \int_{\mathbb{R}^d} W_x
\frac{\partial^d \phi(x)}{\partial x_1\cdots\partial x_d}{\rm d}x,
\end{equation}
which satisfies the following properties (cf. \cite[Proposition A.2]{BCL}). 

\begin{lemma}\label{prop}
Let $\phi(x)$ be a test function with a compact support in $\mathbb{R}^d$. We
have
\[
 {\bf E} \Bigl(\int_{\mathbb{R}^d}\phi(x){\rm d}W_x\Bigr)=0,\quad
 {\bf E} \Bigl(\bigl|\int_{\mathbb{R}^d} \phi(x){\rm
d}W_x\bigr|^2 \Bigr)=\int_{\mathbb{R}^d}|\phi(x)|^2{\rm d}x.
\]
\end{lemma}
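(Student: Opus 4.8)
\emph{Proof proposal.} The plan is to read both identities directly off the integration-by-parts definition \eqref{si}, pushing the expectation inside by Fubini's theorem and then exploiting the covariance structure of the Brownian sheet recorded after Definition~\ref{bs}. Abbreviate $\psi(x)=\partial_{x_1}\cdots\partial_{x_d}\phi(x)$, the mixed partial appearing in \eqref{si}. The mean is the easy half: by \eqref{si} and Fubini, ${\bf E}\bigl(\int_{\mathbb{R}^d}\phi\,{\rm d}W\bigr)=(-1)^d\int_{\mathbb{R}^d}{\bf E}(W_x)\psi(x)\,{\rm d}x=0$, since every $W^t$ is a centered Gaussian sheet (Definition~\ref{bs}) and hence so is $W_x=W^{t(x)}_{\breve x}$. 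Fubini applies because $\psi$ is compactly supported and ${\bf E}|W_x|=\sqrt{2/\pi}\,(\prod_j|x_j|)^{1/2}$ is locally bounded.

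For the isometry I again begin with Fubini, now legitimate because ${\bf E}|W_xW_y|\le(\prod_j|x_j|)^{1/2}(\prod_j|y_j|)^{1/2}$ by Cauchy--Schwarz and $\psi$ has compact support. As $W_x$ is real-valued and the two factors $(-1)^d$ from \eqref{si} cancel in $|\cdot|^2$, I obtain
\[
{\bf E}\Bigl(\bigl|\int_{\mathbb{R}^d}\phi\,{\rm d}W\bigr|^2\Bigr)=\int_{\mathbb{R}^d}\int_{\mathbb{R}^d}C(x,y)\,\psi(x)\overline{\psi(y)}\,{\rm d}x\,{\rm d}y,\qquad C(x,y):={\bf E}(W_xW_y).
\]
Next I identify $C$ through the orthant construction: if $t(x)\ne t(y)$ then $W_x,W_y$ are built from independent sheets and $C(x,y)=0$, while if $t(x)=t(y)$ then $C(x,y)={\bf E}(W^t_{\breve x}W^t_{\breve y})=\prod_j(|x_j|\wedge|y_j|)$. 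Hence $C$ factorizes as $C(x,y)=\prod_{j=1}^d c(x_j,y_j)$ with $c(s,u)=(|s|\wedge|u|)\mathbf{1}_{{\rm sgn}(s)={\rm sgn}(u)}$, and one checks that $c$ is continuous across $s=0$.

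The central step is to integrate by parts $2d$ times, transferring each $\partial_{x_j}$ and $\partial_{y_j}$ from $\psi,\overline{\psi}$ onto $C$; no boundary terms appear since $\phi$, hence $\psi$, is compactly supported while $C$ grows only polynomially, and the $2d$ sign changes combine to $+1$. The key is the one-dimensional distributional identity $\partial_s\partial_u c(s,u)=\delta(s-u)$: on each quadrant $c$ is $\pm\min$ or $\pm\max$, for which $\partial_s\partial_u\min(s,u)=\delta(s-u)$ by a direct computation, the mixed-sign regions contribute nothing because the diagonal does not meet them, and the continuity of $c$ at $s=0$ prevents any spurious singular layer on the hyperplanes $\{x_j=0\}$. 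Taking the product over coordinates gives $\partial_{x_1}\cdots\partial_{x_d}\partial_{y_1}\cdots\partial_{y_d}C(x,y)=\prod_j\delta(x_j-y_j)=\delta(x-y)$, so the double integral collapses to $\int_{\mathbb{R}^d}\int_{\mathbb{R}^d}\delta(x-y)\phi(x)\overline{\phi(y)}\,{\rm d}x\,{\rm d}y=\int_{\mathbb{R}^d}|\phi(x)|^2\,{\rm d}x$.

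I expect the main obstacle to be the rigorous justification of $\partial_x^{\,d}\partial_y^{\,d}C=\delta(x-y)$ together with the integration by parts, exactly where the sign pattern changes: one must confirm that the orthant-wise pieces of $C$ glue continuously across $\{x_j=0\}$ and $\{y_j=0\}$ so that differentiating the glued kernel produces no unwanted delta on those hyperplanes, and that the off-diagonal vanishing of $C$ in mixed orthants is consistent with $\delta$ being supported on the diagonal. Once this kernel computation is secured, the remaining ingredients---Fubini, cancellation of the $(-1)^d$ factors, and the final sifting property of the delta---are routine.
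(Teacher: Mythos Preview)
The paper does not prove this lemma; it is stated with the citation ``cf.\ \cite[Proposition~A.2]{BCL}'' and no argument is given, so there is no in-paper proof to compare against. Your proposal therefore stands on its own, and it is correct.

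The reduction to the one-variable distributional identity $\partial_s\partial_u\, c(s,u)=\delta(s-u)$ for $c(s,u)=(|s|\wedge|u|)\,\mathbf{1}_{\mathrm{sgn}(s)=\mathrm{sgn}(u)}$, combined with the tensor-product factorization of $C$, is exactly the right move. The obstacle you flag at the end is milder than you suggest: instead of differentiating $c$ pointwise and tracking possible singular layers across $\{s=0\}$, simply evaluate the pairing $\int_{\mathbb{R}^2} c(s,u)\,f'(s)\,g'(u)\,ds\,du$ directly. The mixed-sign quadrants contribute zero; on $\{s,u>0\}$ two elementary one-variable integrations by parts yield $\int_0^\infty f(u)g(u)\,du$, with the boundary terms at $s=0$ and $u=0$ vanishing because $c(0,u)=c(s,0)=0$ and $f,g$ have compact support; the quadrant $\{s,u<0\}$ follows by the reflection $(s,u)\mapsto(-s,-u)$. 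Summing gives $\int_{\mathbb{R}} f(u)g(u)\,du$, which is precisely the pairing with $\delta(s-u)$, so no separate gluing argument across the coordinate hyperplanes is needed.
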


The stochastic integral \eqref{si} can be extended to define the
multi-dimensional stochastic integrals. Let $W(x)=(W_1(x), \dots,
W_{n}(x))^\top$ be an $n$-dimensional Brownian sheet, where $W_i(x)$ and
$W_j(x)$ are two one-dimensional independent Brownian sheets for $i\neq j$. If
$\phi(x)$ is a deterministic square-integrable complex-valued $m\times n$
matrix-valued test function with each component compactly supported in
$\mathbb{R}^d$, i.e.,
\[
\phi(x)=\begin{bmatrix}
  \phi_{11}(x) & \cdots & \phi_{1n}(x)\\
  \vdots &   &\vdots\\
  \phi_{m1}(x) &\cdots &\phi_{mn}(x)
\end{bmatrix}.
\]
Using the matrix notation, we may define the multi-dimensional stochastic
integral
\begin{equation}\label{msi}
\int_{{\mathbb R}^{d}}\phi(x){\rm d}W(x)=\int_{{\mathbb R}^{d}}
\begin{bmatrix}
  \phi_{11}(x) & \cdots & \phi_{1n}(x)\\
  \vdots &   &\vdots\\
  \phi_{m1}(x) &\cdots &\phi_{mn}(x)
\end{bmatrix}
\begin{bmatrix}
 {\rm d}W_{1}(x)\\
  \vdots\\
 {\rm d}W_{n}(x)
\end{bmatrix},
\end{equation}
which is an $m\times 1$ matrix and its $j$-th component is the 
sum of 1-dimensional stochastic integrals
\[
\sum_{k=1}^{n}\int_{{\mathbb R}^{d}}\phi_{jk}(x){\rm d}W_{k}(x).
\]

We have the similar properties for the multi-dimensional stochastic integral.

\begin{proposition}\label{propn}
Let $W(x)=(W_1(x), \dots, W_n(x))^\top$ be an $n$-dimensional Brownian sheet and
$\phi(x)=(\phi_{ij}(x))_{m\times n}$ be an $m\times n$ matrix-valued function
with each component $\phi_{ij}(x)$ compactly supported in $\mathbb{R}^d$. We
have 
\[
 {\bf E} \Bigl(\int_{\mathbb{R}^d}\phi(x){\rm d}W_x\Bigr)=0,\quad
 {\bf E} \Bigl(\bigl|\int_{\mathbb{R}^d} \phi(x){\rm
d}W_x\bigr|^2 \Bigr)=\int_{\mathbb{R}^d}\|\phi(x)\|^2{\rm d}x,
\]
where $\|\cdot\|$ is the Frobenius norm. 
\end{proposition}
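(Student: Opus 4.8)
The plan is to reduce the two matrix-valued identities to the scalar statement of Lemma~\ref{prop}, using the component-wise definition \eqref{msi} of $\int_{\mathbb{R}^d}\phi(x)\,{\rm d}W_x$ together with the mutual independence of the one-dimensional sheets $W_1,\dots,W_n$.

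For the mean, the $j$-th component of $\int_{\mathbb{R}^d}\phi(x)\,{\rm d}W_x$ is the finite sum $\sum_{k=1}^n\int_{\mathbb{R}^d}\phi_{jk}(x)\,{\rm d}W_k(x)$ of scalar stochastic integrals of the compactly supported complex test functions $\phi_{jk}$ against the individual sheets $W_k$. By Lemma~\ref{prop} each term has zero expectation, so by linearity every component, and hence the whole vector, has zero expectation.

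For the second moment I would write $|z|^2=\sum_{j=1}^m|z_j|^2$, expand the square of each component, and take expectations:
\[
{\bf E}\Bigl(\bigl|\int_{\mathbb{R}^d}\phi\,{\rm d}W_x\bigr|^2\Bigr)
=\sum_{j=1}^m\sum_{k=1}^n\sum_{k'=1}^n
{\bf E}\Bigl(\int_{\mathbb{R}^d}\phi_{jk}\,{\rm d}W_k\ \overline{\int_{\mathbb{R}^d}\phi_{jk'}\,{\rm d}W_{k'}}\Bigr).
\]
The diagonal terms $k=k'$ are precisely ${\bf E}\bigl(\bigl|\int_{\mathbb{R}^d}\phi_{jk}\,{\rm d}W_k\bigr|^2\bigr)=\int_{\mathbb{R}^d}|\phi_{jk}(x)|^2\,{\rm d}x$ by Lemma~\ref{prop}, and summing over $j$ and $k$ yields $\int_{\mathbb{R}^d}\|\phi(x)\|^2\,{\rm d}x$ because $\|\phi\|^2=\sum_{j,k}|\phi_{jk}|^2$ is the squared Frobenius norm. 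It then only remains to kill the off-diagonal terms.

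This last step is the main obstacle, and it is where the independence hypothesis enters. From the distributional definition \eqref{si}, $\int_{\mathbb{R}^d}\phi_{jk}\,{\rm d}W_k$ is a (Gaussian) functional of the single sheet $W_k$, and similarly for $W_{k'}$; since $W_k$ and $W_{k'}$ are independent when $k\neq k'$, the two integrals are independent random variables, so ${\bf E}\bigl(\int\phi_{jk}\,{\rm d}W_k\,\overline{\int\phi_{jk'}\,{\rm d}W_{k'}}\bigr)={\bf E}\bigl(\int\phi_{jk}\,{\rm d}W_k\bigr)\,\overline{{\bf E}\bigl(\int\phi_{jk'}\,{\rm d}W_{k'}\bigr)}=0$ by the mean-zero property already proved. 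One could package this cleanly as a bilinearity identity ${\bf E}\bigl(\int\phi\,{\rm d}W_k\,\overline{\int\psi\,{\rm d}W_{k'}}\bigr)=\delta_{kk'}\int_{\mathbb{R}^d}\phi\overline{\psi}\,{\rm d}x$ (the case $k=k'$ being Lemma~\ref{prop} after polarization, the case $k\neq k'$ being the independence argument); combined with the bookkeeping above, this yields both assertions of the proposition.
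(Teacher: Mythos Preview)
Your proof is correct and follows essentially the same route as the paper: reduce to scalar integrals via the component-wise definition \eqref{msi}, apply Lemma~\ref{prop} term by term, and sum to recover the Frobenius norm. The only difference is that the paper passes directly from ${\bf E}\bigl(\bigl|\sum_k\int\phi_{jk}\,{\rm d}W_k\bigr|^2\bigr)$ to $\sum_k\int|\phi_{jk}|^2\,{\rm d}x$ without comment, whereas you explicitly expand the square and justify the vanishing of the off-diagonal terms via independence of the sheets $W_k$ and $W_{k'}$; your treatment is more complete on exactly the point the paper leaves implicit.
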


\begin{proof}
 It follows from \eqref{msi} and Lemma \ref{prop} that
 \[
  {\bf E}\Bigl(\int_{\mathbb{R}^d}\phi(x){\rm d}W_x\Bigr)=\begin{bmatrix}
  \sum_{k=1}^{n}{\bf E}\Bigl(\int_{{\mathbb R}^{d}}\phi_{jk}(x){\rm
d}W_{k}(x)\Bigr)\\
\vdots\\
\sum_{k=1}^{n}{\bf E}\Bigl(\int_{{\mathbb R}^{d}}\phi_{jk}(x){\rm
d}W_{k}(x)\Bigr)
\end{bmatrix}=0.
 \]

Using \eqref{msi} and Lemma \ref{prop} again, we have
\begin{align*}
 {\bf E}\Bigl(\bigl|\int_{\mathbb{R}^d} \phi(x){\rm
d}W_x\bigr|^2\Bigr)
&=\sum_{j=1}^{m}{\bf E}\Bigl(\bigl|\sum_{k=1}^{n}\int_{\mathbb{R}^d}
\phi_{jk}(x){\rm
d}W_k(x)\bigr|^2\Bigr)\\
&=\sum_{j=1}^{m}\sum_{k=1}^{n}\int_{\mathbb{R}^d} |\phi_{jk}(x)|^2{\rm
d}x\\
&=\int_{\mathbb{R}^d} \|\phi(x)\|^2{\rm
d}x,
\end{align*}
which completes the proof.
\end{proof}

\end{document}